\title{Local solvability for a quasilinear wave equation with the far field degeneracy: 1D case}
 \author{Yuusuke Sugiyama\footnote{e-mail:sugiyama.y@e.usp.ac.jp\ The University of shiga prefecture}}
\date{}
\theoremstyle{definition} 
\newtheorem{Def}{Deffinition}[section]
\newtheorem{Prop}[Def]{Proposition}
\newtheorem{lemma}[Def]{Lemma}
\newtheorem{theorem}[Def]{Theorem}
\newtheorem{remark}[Def]{Remark}
\newcommand{\N}{\mathbb{N}} 
\newcommand{\R}{\mathbb{R}} 
\newcommand{\al}{\alpha} 
\newcommand{\bt}{\beta} 
\newcommand{\bR}{\bar{R}} 
\newcommand{\bS}{\bar{S}} 
\newcommand{\bV}{\bar{V}} 
\newcommand{\bW}{\bar{W}} 
\begin{document}
\maketitle %
\begin{abstract}
We study the Cauchy problem for the  quasilinear wave equation $ \partial^2 _t u = u^{2a} \partial^2_x u  + F(u) u_x $ with $a \geq 0$ and show a result for the local in time existence under new conditions.
In the previous results, it is assumed that $u(0,x) \geq c_0>0$ for some constant $c_0$ to prove the existence and  the uniqueness.
This assumption ensures  that the equation does not degenerate.
In this paper, we allow the equation to degenerate at spacial infinity.  
Namely we consider the local well-posedness under the assumption that  $u(0,x)>0$ and $u(0,x) \rightarrow 0$ as $|x| \rightarrow \infty$. 
Furthermore, to prove the local well-posedness, we find that the so-called Levi condition appears.
Our proof is based on the method of characteristic and the contraction mapping principle via weighted $L^\infty$ estimates.
\end{abstract}
%

\section{Introduction}
In this paper, we consider the following Cauchy problem of the model quasilinear wave equation in $\R$: 
\begin{eqnarray} 
\left\{  \begin{array}{ll} \label{req}
   \partial^2 _t u = \partial_x ( u^{2a} \partial_x u)  + F(u)u_x ,  \ \ (t,x) \in (0,T] \times \R, \\   
  u(0,x) = u_0 (x),\ \ x \in \R,                \\            
    \partial_t u(0,x) = u_1 (x), \ \ x \in \R,  
\end{array} \right.  
\end{eqnarray} 
where $F$ is a given function and  $a \geq 0$ .
The purpose of this paper is  to show the local existence and the uniqueness under new conditions.
The existence of solutions to the more general quasi-linear wave equations has been widely known since the 1970s.
Kato \cite{tk} and  Hughes, Kato and Marsden \cite{HKM} have shown an abstract theorem about the well-posedness 
of the system of  general quasi-linear wave equations in $L^2$ Sobolev space.
In $1$ dimensional case, the well-posedness in $C^1 _b$ class for first order  hyperbolic equations has been studied  by Douglis \cite{D} and Hartman and Winter \cite{HW2} (see also Majda \cite{m} and Courant and Lax \cite{CL}),
where $C^1 _b $ is a set of continuous and bounded functions whose derivatives are also bounded.
In order to apply these results to the existence problem of \eqref{req}, the following assumption is required:
\begin{eqnarray} \label{non-deg}
 u_0 (x) \geq   c_0 >0
\end{eqnarray}
for a constant $c_0$. This condition ensures that the equation in \eqref{req} is the strictly hyperbolic type near $t=0$.  This paper relaxes this condition.  
We show the  the local existence and the uniqueness of solutions of  \eqref{req} under the assumption that the equation degenerates at spacial infinity.
Namely we weaken \eqref{non-deg} by $u(0,x)>0$ and allow  that  $u(0,x) $ can decay to $0$ as $|x| \rightarrow \infty$ (more precise assumptions are given later).
To the best of my knowledge, the well-possessedness  has never been studied under these types of assumptions.

\subsection{Known results}
Let us review some results on the solvability for degenerate wave equations (weakly hyperbolic equations).
The existence, nonexistence and regularity of solutions to the following type of  linear weakly hyperbolic equations have been studied by many authors (e.g. Oleinik \cite{ol}, Colombini and  Spagnolo  \cite{CS}, Ivrii and Petkov \cite{IV} and Taniguchi and  Tozaki \cite{TT}), 
\begin{eqnarray} \label{wl}
\partial^2 _t u - \sum_{i,j=1} ^n  a_{i,j}(t,x)   u_{x_i x_j} + \sum_{j=1} ^n b_j (t,x)  u_{x_j}=0,
\end{eqnarray}
where $a_{i,j}$ and $b_j$ are smooth functions and $\sum_{i,j=1} ^n  a_{i,j}(t,x) \xi_i \xi_j \geq 0$ is assumed for $(\xi_1 , \ldots , \xi_n) \in \R^n$. We note that  $\sum_{i,j=1} ^n a_{i,j}(t,x) \xi_i \xi_j =0$ corresponds to the degeneracy.
In Oleinik \cite{ol}, \eqref{wl} have been solved under the so-called Levi condition: 
\begin{eqnarray} \label{levi}
  C_1   \left(  \sum_{j=1}^n b_j \xi_j \right)^2 \leq C_2 \left( \sum_{i,j=1} ^n a_{i,j} \xi_i \xi_j + \partial_t a_{i,j} \xi_i \xi_j  \right)  .
\end{eqnarray}
Even if  the Levi condition is assumed, we can only obtain  the following energy estimate with the regularity loss for weakly hyperbolic equations:
\begin{eqnarray} \label{re-loss}
\| u \|_{H^{s}} + \| u_t \|_{H^{s-1}} \leq C(\| u_0 \|_{H^{s+r_1}} + \| u_1 \|_{H^{s-1 +r_2}}),
\end{eqnarray}
where $s$ is an arbitrary real number and $r_1$ and $r_2$ are  non-negative numbers. It is known that this estimate is optimal in the sense of the regularity by observing some
explicit solution to some special linear weakly hyperbolic equations. 
Ivrii and Petkov in \cite{IV} have treated the the following model of the $1D$ weakly hyperbolic equation:
\begin{eqnarray*}
u_{tt} - t^{2l} u_{xx} +t^k u_x =0 .
\end{eqnarray*}
They have shown that the Levi condition ($k \geq l-1$) is  necessary  for the Cauchy problem of this equations to be $C^\infty$ well-posed.
Colombini and  Spagnolo in \cite{CS} have given an example of a $C^\infty$ function $a(t) \geq 0$ such that
$$
u_{tt} - a(t)   u_{xx}=0
$$
is not well-posed in $C^\infty$. Roughly speaking,  highly oscillatory behaviors of $a(t)$ near the point that $a(t)=0$ causes the ill-posedness.  In \cite{qh}, Han has derived an energy inequality with a regularity loss for the linear weakly hyperbolic equation:
\begin{eqnarray*}
\partial^2 _t u - a(t,x)  u_{xx} =0,
\end{eqnarray*}
where $a(t,x) = t^m + a_1(x) t^{m-1} + a_2 (x) t^{m-2} + \cdots +a_{m-1} (x) t + a_m (x) $.

Manfrin in \cite{rm2} have established the local existence and the uniqueness for following  $1D$ degenerate quasilinear wave equations
with $u_0, u_1  \in C^\infty _0 (\R^n)$:
\begin{eqnarray*}
u_{tt} = a(u) \Delta u,
\end{eqnarray*}
where $a(u)$ is a analytic function satisfying $a(0)=0$. This result can be extended to more general degenerate wave equations (see also Manfrin \cite{rm0, rm1}).
In Dreher's paper \cite{dre},  he also has shown the local solvability for  $\partial^2 _t u =\partial_x (|\partial_x u|^{p-2} \partial_x)$ with $p >5$ and under the initial condition that
$u_0, u_1  \in C^k _0 (\R^n)$ for a large natural number $k$.
 Since their proof is based on the Nash-Moser implicit function theorem and the argument in the Oleinik's paper \cite{ol},  the compactness of the support of initial data is essentially 
used. Hence it does not seem difficult to extend Manfrin's method to the case that initial data are not compactly supported.

In \cite{HW}, Hu and Wang have shown the local existence and uniqueness of solutions to the following  variational wave equation:
\begin{eqnarray} \label{v-hen}
\partial^2 _t u = c(u,x) \partial_x ( c(u,x) \partial_x u)
\end{eqnarray}
with initial data  and the function $c(u,x)$ satisfying
\begin{align*}
c(u(0,x),x) =0,\\
\partial_t u(0,x) \geq c_0 >0, \\
c_u (u,x) \geq c_1 >0
\end{align*}
for some constants $c_0$ and $c_1$. The choice of initial data implies that the equation degenerates at $t=0$ and  that $c(u,x)$ becomes positive uniformly and immediately after $t=0$.
The method in \cite{HW} is inspired by Zhang and Zheng's paper \cite{tyzy} which studies  the existence of solutions to Euler type equation in gas dynamics. In \cite{tyzy} and \cite{HW}, they use method of characteristic for a new dependent variable
and the fixed point theorem in a special metric space.

\subsection{Assumptions and main theorem}

Before stating main theorem of this paper, we introduce assumptions on initial data and the function $F$.
We set $\gamma =\gamma (a, \al)$ as below:
\begin{eqnarray*}
\gamma = \left\{
\begin{array}{ll}
0, & a \geq 1, \\
(1-a)\al, &  \mbox{otherwise}.
\end{array}
\right.
\end{eqnarray*}
For initial data $u_0 \in C^2  (\R)$ and $u_1 \in C^1 _b (\R)$, we assume that 
\begin{align}  
 c_1 \braket{x}^{-\alpha}  \leq  u_0 (x) \leq  c_2, \label{ini-con} \\
 |u_{1} (x) \pm u_{0} ^a u'_{0} (x)| \leq c_3  \braket{x}^{-\beta}, \label{ini-con2}\\
  \left|\frac{d}{dx} (u_{1} (x) \pm u_{0} ^a u'_{0} (x)) \right| \leq c_4\braket{x}^{-\gamma} \label{ini-con4}
\end{align}
with conditions on $\alpha \geq 0$ and $\beta \geq 0$  that
\begin{align}  
\al \leq \bt, \label{ab1}\\
a\al  \leq \bt, \label{ab2}
\end{align}
where $u' _0=d u_0 /dx$ and $\braket{x}$ is defined by $ \braket{x}=(1+x^2)^{1/2}$ and  $c_1, c_2, c_3, c_4$ are positive constant.
The assumption \eqref{ini-con} indicates that the equation in \eqref{req}  can degenerate at spatial infinity.
For the function $F \in C([0,\infty)) \cap C^1 ((0,\infty))$, we assume that
\begin{eqnarray} 
 |F(\theta)| \leq C_K \theta^a, \label{f-con}\\
 |F'(\theta)| \leq C_K \theta^{a-1} \label{f-con2}
\end{eqnarray}
for $\theta \in (0,K]$ and $C_K$ is a positive constant. Typical example of $F$  is $F(\theta)= \theta^b$ with $b \geq a$. This condition 
appears in the study of weakly hyperbolic equations called a sufficient Levi condition  (e.g. Manfrin's paper \cite{rm2}) .
Main theorem of this paper is as follows.
\begin{theorem} \label{main}
Let $u_0 \in C^2  (\R) $ and $u_1 \in C^1 _b (\R)$. Suppose that the conditions \eqref{ini-con}-\eqref{f-con2} hold.
Then there exists a number $T>0$ depending on  the constants in \eqref{ini-con}-\eqref{f-con2} such that the Cauchy problem \eqref{req} has a unique local solution $u \in C^2 ([0,T]\times \R)  $ satisfying  that for all $(t,x) \in [0,T ] \times \R$
\begin{eqnarray}
 C_1 \braket{x}^{-\alpha} \leq u(t,x) \leq C_2,  \label{dc1} \\
|(u_t  \pm u^a u_x )(t,x)| \leq C_3  \braket{x}^{-\beta}, \label{dc2} \\
|\partial_t \left((u_t  \pm u^a u_x) (t,x)\right)| +|\partial_t \left((u_t  \pm u^a u_x) (t,x)\right)| \leq  C_4\braket{x}^{-\gamma},\label{dc3}
\end{eqnarray}
where $C_1, C_2, C_3, C_4$ are positive constants. 
\end{theorem}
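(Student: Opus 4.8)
The plan is to diagonalize the equation using Riemann-type invariants and then set up a fixed-point iteration in weighted $L^\infty$ spaces along characteristics. First I would introduce the new dependent variables $R = u_t + u^a u_x$ and $S = u_t - u^a u_x$, together with the two characteristic fields $\partial_\pm = \partial_t \pm u^a \partial_x$. A direct computation turns \eqref{req} into a first-order system of the schematic form $\partial_- R = (\text{terms involving } a u^{a-1} u_x,\ F(u),\ R,\ S)$ and $\partial_+ S = (\text{similar})$, coupled with $u_t = (R+S)/2$ and the transport equation $\partial_t u = (R+S)/2$ used to recover $u$. The key structural point is that, by \eqref{f-con}, every term on the right-hand side carries a factor $u^a$ or $a u^{a-1}u_x = \partial_x(u^a)/1\cdot(\ldots)$, which is precisely where the Levi-type condition \eqref{f-con}--\eqref{f-con2} and the exponent $\gamma$ enter: the worst term is controlled by $u^{a-1}$ times a first derivative, and the balance conditions \eqref{ab1}--\eqref{ab2} guarantee that the weights $\braket{x}^{-\al}$, $\braket{x}^{-\bt}$, $\braket{x}^{-\gamma}$ close up under this scaling.

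Next I would define the iteration: given $(u^{(n)}, R^{(n)}, S^{(n)})$ in the closed set $X_T$ of functions on $[0,T]\times\R$ satisfying \eqref{dc1}--\eqref{dc3} with constants $C_i$ to be chosen (roughly $2c_i$), solve the characteristic ODEs $\dot x_\pm = \pm (u^{(n)})^a$ to define the flows, then integrate the transport equations for $R^{(n+1)}, S^{(n+1)}$ along these characteristics and integrate $\partial_t u^{(n+1)} = (R^{(n+1)}+S^{(n+1)})/2$ in $t$. For the a priori bounds one integrates in $t$ from $0$: the lower bound \eqref{dc1} follows because $|u^{(n+1)}-u_0|\le T\sup|R+S|/2 \le CT\braket{x}^{-\bt}\le CT\braket{x}^{-\al}$ by \eqref{ab1}, which for small $T$ is dominated by $\tfrac{1}{2}c_1\braket{x}^{-\al}$; the upper bound is immediate. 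The bound \eqref{dc2} for $R^{(n+1)},S^{(n+1)}$ comes from Gronwall along characteristics, using that the right-hand side is bounded by $C(u^{(n)})^{a}\braket{x}^{-\bt}\lesssim \braket{x}^{-a\al-\bt}\le C\braket{x}^{-\bt}$ once the weight is absorbed; here one must also check the characteristics do not move $x$ too far, i.e. $|x_\pm(t) - x|\le T\sup (u^{(n)})^a \le CT$, so that $\braket{x_\pm(t)}\sim\braket{x}$ uniformly for $t\le T$ — this comparability of weights along characteristics is the technical workhorse used repeatedly. The derivative bound \eqref{dc3} is obtained by differentiating the transported equations in $t$ (and in $x$), where the exponent $\gamma$ is exactly what survives: when $a\ge1$ the bad factor $u^{a-1}$ is bounded so $\gamma=0$, while when $a<1$ one loses $\braket{x}^{(1-a)\al}=\braket{x}^{-\gamma}\cdot\braket{x}^{?}$ — more precisely the factor $u^{a-1}\sim\braket{x}^{(1-a)\al}$ forces the weaker weight $\braket{x}^{-\gamma}$ with $\gamma=(1-a)\al$.

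Then I would prove contraction: for two iterates, subtract the systems and estimate the differences $u^{(n+1)}-\tilde u^{(n+1)}$, $R^{(n+1)}-\tilde R^{(n+1)}$, $S^{(n+1)}-\tilde S^{(n+1)}$ in the weighted sup norms \emph{without} the derivative component (one contracts in a weaker norm than the one defining $X_T$, a standard device); the differences of characteristics are controlled by $\|u^{(n)}-\tilde u^{(n)}\|$ via Gronwall, and all source terms are Lipschitz in $(u,R,S)$ on the relevant ranges by \eqref{f-con2}. Choosing $T$ small makes the iteration a contraction, yielding a fixed point $(u,R,S)$; the limit lies in $X_T$ by closedness, so \eqref{dc1}--\eqref{dc3} hold. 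Finally I would recover that $u\in C^2$ by bootstrapping: $u_t=(R+S)/2$ and $u^a u_x=(R-S)/2$ give $u_x\in C^1$ (using the lower bound \eqref{dc1} to divide by $u^a$), and then the equation \eqref{req} itself upgrades $\partial_t^2 u$, $\partial_x^2 u$ to continuity. The main obstacle I anticipate is the derivative estimate \eqref{dc3}: one must differentiate the characteristic integral representation, which produces a term with $\partial_x$ of the flow and a term $u^{a-1}\partial_x u$ that is only controlled in the $\braket{x}^{-\gamma}$ weight, and verifying that this feeds back consistently — i.e. that the space $X_T$ is genuinely invariant and that the two different weights $\bt$ and $\gamma$ interact correctly through \eqref{ab1}--\eqref{ab2} — is the delicate bookkeeping that the conditions on $\al,\bt,a$ are designed to make work.
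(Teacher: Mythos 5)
Your overall strategy coincides with the paper's: Riemann invariants $R=u_t+u^au_x$, $S=u_t-u^au_x$, a fixed-point iteration along characteristics in weighted $L^\infty$ spaces, comparability of $\braket{x_\pm(t)}$ with $\braket{x}$ for small $T$, the role of $\gamma$ when $a<1$, and contraction in a weaker norm (without the derivative component) than the norm defining the invariant set. Two steps, however, are asserted where the paper has to do genuine work, and as stated they are gaps.

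First, you write that ``the limit lies in $X_T$ by closedness.'' The set cut out by the derivative bounds \eqref{dc3} is \emph{not} closed under the $L^\infty$ convergence in which you contract: a uniform limit of functions with uniformly bounded derivatives is only Lipschitz, and you get neither differentiability everywhere nor continuity of the derivatives for free. The paper explicitly notes this non-closedness and works around it by extracting, via Arzel\'a--Ascoli and a diagonal argument, a convergent subsequence of the characteristic curves $x_{\pm,n}(\cdot\,;t,x)$ (the uniform Lipschitz bound of Lemma \ref{lip-x-lm} is exactly what makes this possible), passing to the limit in the integral identities for $R_x$, $S_x$, and finally invoking Douglis's theorem to recover continuity of $R_x$, $S_x$ and hence $u\in C^2$. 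Your bootstrap ``the equation itself upgrades $\partial_t^2u,\partial_x^2u$ to continuity'' presupposes that $R_x,S_x$ are continuous, which is precisely the point at issue.

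Second, you use $u^au_x=(R-S)/2$ to recover $u_x$, but in the iteration $u$ is defined only by time-integration of $(R+S)/2$, so this spatial relation is not built in and must be \emph{proved} for the fixed point. It fails without the compatibility conditions $u_0'=(R_0-S_0)/(2u_0^a)$, $u_1=(R_0+S_0)/2$ at $t=0$, which your proposal never imposes; the paper's Proposition \ref{last-pr} derives the identity by integrating $\int_0^t u^{-a}(R_t-S_t)\,ds$ by parts in time and cancelling against the $N_1,N_2$ contributions. A closely related integration by parts is also needed earlier just to show the iterate satisfies $\braket{x}^{\beta}u^au_x\in L^\infty$ (the naive bound via $\int_0^t(R_x+S_x)\,ds$ only decays like $\braket{x}^{-\gamma}$, which is too weak). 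These are fixable within your framework, but they are the delicate points rather than routine bookkeeping.
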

Theorem \ref{main} asserts the local existence and uniqueness of solutions of \eqref{req} under 
the Levi type condition without the regularity loss.
Our proof is based on the method of characteristic and the contraction mapping principle via weighted $L^\infty$ estimates.
In contrast to previous results on the existence for strictly hyperbolic equations, $1/u$ is not bounded . 
To avoid this crux, we use the spatial decay of $u_t \pm u^a u_x$. In particularly, this property helps to show the boundedness of the derivative of characteristic curves
$x_{\pm}(t)$ with initial position (see Lemma \ref{x-d-lm}). We also remark that our approach is applicable to various type of 1D quasilinear wave equations (e.g. the equation as \eqref{v-hen} under suitable condition on $c(u,x)$).

\begin{remark}
Suppose that initial data is $(u_0(x), u_1 (x))=(\braket{x}^{-\alpha_1} , \braket{x}^{-\alpha_2})$ with $\al_1, \al_2 \geq 0$.
If $\al_2 \geq \al_1$ and $a\al_1 \leq \al_2$ are satisfied, then  all assumptions  \eqref{ini-con}-\eqref{ab2} on initial data are satisfied. We also remark that the condition \eqref{ab2} is not necessary in the case that $a\leq 1$, since \eqref{ab1} implies \eqref{ab2}. While if $a \geq 1$,  \eqref{ab1} is not necessary.
\end{remark}

\subsection{Notation and plan of the paper}

For a domain $\Omega \subset \R^n$, we define $C^m _b(\Omega)$ with $m \in \N$ as follows
\begin{eqnarray*}
C^m _b (\Omega) = \{ f \in C^m (\Omega) \ | \   \sum_{|\alpha| \leq m}\sup_{x \in \Omega} | \partial_x ^\alpha f(x)| <\infty \}.
\end{eqnarray*}
We write $C _b (\Omega) =C^0 _b  (\Omega)$ and denote the Lebesgue space for $1\le p\le \infty$ on $\R^n$ by $L^{p} $ with the norm $\| \cdot \|_{L^p }$.
For a Banach space $X$, $1\le p\le \infty$ and $T>0$, we denote the set of all $X$-valued $L^p$ functions with $t \in [0,T]$
by $L^p([0,T]; X)$. For convenience, we denote $L^p([0,T]; X)$ by $L^p _T X$. The norm of $L^p _T X$ is denoted by $\|f \|_{L^p _T X}$. 
Various constants are simply denoted by $C$ or $C_j$ for $j \in \N$. 
We denote that $\braket{x}=(1+x^2)^{1/2}$.

The remainder of the present paper is organized as follows. 
In Section 2, we review several  formulas for the unknown valuable $R=u_t + u^a u_x$ and $S= u_t - u^a u_x$, which are called Riemann invariant in the study of the 1D hyperbolic conservation law, and give some estimates for characteristic curves. 
In Section 3, we show Theorem \ref{main} by using the method of characteristic, weighted $L^\infty$ estimates and the contraction mapping principle.
Concluding remarks are given in Section 5.

\section{Preliminaries}

\subsection{Basic formulation for unknown variables $R$ and $S$}
We set $R(t,x)$ and $S(t,x)$ as follows
\begin{eqnarray}\label{ri}\left\{
\begin{array}{ll}
R =\partial_t u +u^a \partial_x u, \\
S =\partial_t u - u^a \partial_x u.
\end{array}\right.
\end{eqnarray}
By \eqref{req}, $R$ and $S$ are  solutions to the system of the following first order equations:
\begin{eqnarray}\label{fs}
 \left\{  
\begin{array}{ll}
\partial_t R -u^a\partial_x R=N_1(u,R,S) + L(u,R,S) , \\
\partial_x u = \dfrac{1}{2u^a}(R - S),\\
\partial_t S +u^a\partial_x S =N_2(u,R,S) + L(u,R,S),
\end{array}
\right.
\end{eqnarray} 
where we set
$$
L(u,R,S)=\dfrac{F(u)(R-S)}{2u^a},
$$
$$
N_1(u,R,S)= \dfrac{a}{2u}(R^2 -RS)
$$
and
$$
N_2(u,R,S)= \dfrac{a}{2u}(S^2 -RS).
$$
Let $x_{\pm} (t)$ be characteristic curves on the first and third equations of \eqref{fs}
respectively. That is,  $x_{+} (t)$ and $x_{-} (t)$ are solutions to the following differential equations respectively:
 \begin{eqnarray}\label{cc}
\dfrac{d}{dt} x_{\pm} (t)=\pm u^a(t,x_{\pm} (t)).
\end{eqnarray}
When we emphasize the characteristic curves go through $(s,y)$, we denote $x_{\pm} (t)$ by  $x_{\pm} (t; s,y)$. 
That is, $x_{\pm} (t; s,y)$ satisfies that
\begin{eqnarray} \label{int-x}
x_{\pm} (t; s,y) = y \pm \int_s ^t  u^a (\tau, x_{\pm} (\tau; s,y)) d\tau. 
\end{eqnarray}
On the characteristic curves, $R$ and $S$ satisfy that
\begin{eqnarray}
 \left\{  
\begin{array}{ll}
\dfrac{d}{dt} R (t,x_- (t)) =N_1(u,R,S)(t,x_- (t)) + L(u,R,S)(t,x_- (t)) , \\
\dfrac{d}{dt} S (t,x_+ (t))  =N_2(u,R,S) (t,x_+ (t)) + L(u,R,S)(t,x_+ (t)).\label{fs-ch}
\end{array}
\right.
\end{eqnarray} 
\subsection{Some estimates of characteristic curves}
We prepare some estimates for characteristic curves for $u \in C^1  ([0,T]\times \R) $ satisfying for $\al \geq 0$
\begin{align} \label{u-pro}
 \braket{x}^{-\al} A_0 \leq  u(t,x) \leq A_1 ,
\end{align}
where $A_0$ and $A_1$ are positive constants.
In addition, we assume that
\begin{align} \label{u-pro2}
0 \leq  u^a |u_x (t,x)| \leq A_2 \braket{x}^{-\al}
\end{align}
for a constant $B_1$. 
The boundedness of $u$   and \eqref{int-x} implies the following estimate with $s,t \in [0,T]$:
 \begin{align} \label{x-es}
x -  A_1 ^a|t-s| \leq x_{\pm} (s;t,x)\leq  x + A_1 ^a|t-s|.
\end{align}
Next we show a lemma ensures a uniform Lipschitz continuity of $x_{\pm} (t;s,y )$.
This lemma helps to show  that a sequence of characteristic curves satisfies an assumption of  the Arzel\'a-Ascoli theorem.
\begin{lemma} \label{lip-x-lm}
Let $u \in C^1  ([0,T]\times \R) $. Suppose that \eqref{u-pro}  and  \eqref{u-pro2} hold.  Then the characteristic curves fulfill that for $x_1, x_2 \in \R$ and $t_1 , t_2,  t_3, t_4 \in [0,T]$ 
\begin{eqnarray} \label{lip-x}
|x_{\pm} (t_3; t_1,x_1) -  x_{\pm} (t_4; t_2,x_2 )| \leq 3 (1+A_1 ^{a})( |x_1 - x_2| +  |t_1 - t_2| + |t_3 - t_4|),
\end{eqnarray}
if $T>0$ is sufficiently small.
\end{lemma}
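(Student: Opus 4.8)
The plan is to bound the quantity in \eqref{lip-x} by splitting it, via the triangle inequality, into three contributions --- one measuring the change of the evaluation time $t_3\mapsto t_4$, one the change of the base point $x_1\mapsto x_2$, and one the change of the base time $t_1\mapsto t_2$ --- and to control each of them through the integral identity \eqref{int-x}, a uniform spatial Lipschitz bound for $u^a$, and Gronwall's inequality.

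First I would establish that, under \eqref{u-pro} and \eqref{u-pro2}, the map $x\mapsto u^a(t,x)$ is globally Lipschitz, uniformly in $t\in[0,T]$: writing $\partial_x(u^a)=a\,u^{a-1}u_x$ and using $u^{a-1}|u_x|=u^{-1}\cdot u^a|u_x|\le (A_0^{-1}\braket{x}^{\al})(A_2\braket{x}^{-\al})=A_2/A_0$, one gets $|\partial_x(u^a)|\le aA_2/A_0=:L$ on all of $[0,T]\times\R$. This is the one place where some care is required: when $0\le a<1$ the factor $u^{a-1}$ blows up where $u$ decays, and it is exactly the matching weight $\braket{x}^{-\al}$ appearing in both \eqref{u-pro} and \eqref{u-pro2} that cancels it. In particular the ODE \eqref{cc} is uniquely solvable on $[0,T]$ (no blow-up since $|u^a|\le A_1^a$), so the characteristics enjoy the flow identity $x_\pm(t;t_2,x_2)=x_\pm\bigl(t;t_1,x_\pm(t_1;t_2,x_2)\bigr)$.

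With this in hand: for the base-point slot, set $g(t):=x_+(t;t_1,x_1)-x_+(t;t_1,x_2)$; subtracting two copies of \eqref{int-x} gives $|g(t)|\le |x_1-x_2|+L\bigl|\int_{t_1}^{t}|g(\tau)|\,d\tau\bigr|$, whence $|g(t)|\le e^{LT}|x_1-x_2|$ for $t\in[0,T]$ by Gronwall (the sign $-$ is identical). For the base-time slot, the flow identity rewrites $x_+(t_4;t_2,x_2)$ as $x_+(t_4;t_1,\tilde x)$ with $\tilde x:=x_+(t_1;t_2,x_2)$, and \eqref{x-es} gives $|\tilde x-x_2|\le A_1^a|t_1-t_2|$; combining with the previous step, $|x_+(t_4;t_1,x_2)-x_+(t_4;t_2,x_2)|\le e^{LT}A_1^a|t_1-t_2|$. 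For the evaluation-time slot, \eqref{int-x} together with the upper bound in \eqref{u-pro} gives at once $|x_+(t_3;t_1,x_1)-x_+(t_4;t_1,x_1)|\le A_1^a|t_3-t_4|$. Adding the three estimates and choosing $T$ so small that $e^{LT}\le 3/2$ --- admissible since $L$ depends only on the constants in \eqref{u-pro}--\eqref{u-pro2} --- each prefactor is at most $3(1+A_1^a)$, which is precisely \eqref{lip-x}; the curve $x_-$ is treated verbatim. The main (indeed only) obstacle is the uniform-in-$x$ Lipschitz bound for $u^a$ just discussed; everything else is Gronwall and bookkeeping, and no compactness or sign information on $a$ is used.
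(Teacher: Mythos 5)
Your proof is correct and rests on the same two ingredients as the paper's: the cancellation $u^{a-1}|u_x|=u^{-1}\cdot u^{a}|u_x|\le A_2/A_0$ coming from the matching weights in \eqref{u-pro} and \eqref{u-pro2}, which makes $u^a$ uniformly Lipschitz in $x$, followed by Gronwall and a triangle-inequality split that isolates the $|t_3-t_4|$ contribution via the bound $|u^a|\le A_1^a$. The only cosmetic difference is that you convert the base-time perturbation into a base-point perturbation through the flow identity, whereas the paper absorbs the term $\bigl|\int_{t_2}^{t_1}u^a\,d\tau\bigr|\le A_1^a|t_1-t_2|$ directly into the Gronwall estimate; both routes yield the stated constant.
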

\begin{proof}
First we show the case that $t_3 = t_4 =t \in [0,T]$ and $t \geq t_1, t_2$. 
From \eqref{int-x}, we can easily  compute that
\begin{align}
|x_{\pm} (t; t_1,x_1) -  x_{\pm} (t; t_2,x_2 )| \leq  & |x_1 - x_2| \notag \\
 + &  \left|\int_{t_1}  ^{t} u^a (\tau, x_{\pm} (\tau; t_1,x_1)) d\tau -   \int_{t_2}  ^{t} u^a (\tau, x_{\pm} (\tau; t_1,x_2)) d\tau   \right| \notag \\
\leq  & |x_1 - x_2| + \left|\int  ^{t_1} _{t_2} u^a (\tau, x_{\pm} (\tau; t_2,x_2)) d\tau \right| \notag  \\
+ &    \int_0 ^{t}  \left| u^a (\tau, x_{\pm} (\tau; t_1,x_1))   -   u^a (\tau, x_{\pm} (\tau; t_2,x_2))  \right| d\tau. \label{x-lip-es}
\end{align}
From \eqref{u-pro} and \eqref{u-pro2}, we have that
$u^{a-1} u_x $ is bounded,
from which, we have for the third term of the right hand side in \eqref{x-lip-es} that
\begin{align*} 
\left| u^a (\tau, x_{\pm} (\tau; t_1,x_1))   -   u^a (\tau, x_{\pm} (\tau; t_2,x_2)) \right| \leq & \left| \int_{x_{\pm} (\tau; t_2,x_2) } ^{x_{\pm} (\tau; t_1,x_1) } u^{a-1} u_x (\tau,y) dy  \right| \\
\leq & C  \left| x_{\pm} (\tau; t_1,x_1)   -    x_{\pm} (\tau; t_2,x_2)  \right|.
\end{align*}
Hence we have  
\begin{align*}
|x_{\pm} (t; t_1,x_1) -  x_{\pm} (t; t_2,x_2 )| \leq  & |x_1 - x_2| + A_1 ^{\al} |t_1 - t_2 | \\
+ &    C\int_0  ^{t}  \left| x_{\pm} (\tau; t_1,x_1)   -    x_{\pm} (\tau; t_2,x_2)  \right| d\tau. \\
\leq  & (1+A_1 ^a)  (|x_1 - x_2| +  |t_1 - t_2 |) \\
 + &   C \int_0  ^{t}  \left| x_{\pm} (\tau; t_1,x_1)   -    x_{\pm} (\tau; t_2,x_2)  \right| d\tau. 
\end{align*}
Thus we have \eqref{lip-x} from the Grownwall inequality
\begin{eqnarray*}
|x_{\pm} (t; t_1,x_1) -  x_{\pm} (t; t_2,x_2 )| \leq  (1+A_1 ^a)  (|x_1 - x_2| +  |t_1 - t_2 | )e^{Ct} .
\end{eqnarray*}
Hence if $T$ is small, then we have that with $t_1 \geq t_2$ 
\begin{eqnarray} \label{x-tt}
|x_{\pm} (t; t_1,x_1) -  x_{\pm} (t; t_2,x_2 )| \leq 2 (1+A_1 ^a)  (|x_1 - x_2| +  |t_1 - t_2 | ).
\end{eqnarray}
In the same way as above, we can show \eqref{x-tt} with the case that $t <t_1$ or $t <t_2$. We omit the proof of this case.
Next we show \eqref{lip-x}. The left hand side of \eqref{lip-x} is written by
\begin{align*}
|x_{\pm} (t_3; t_1,x_1) -  x_{\pm} (t_4; t_2,x_2 )| \leq & |x_{\pm} (t_3; t_1,x_1) -  x_{\pm} (t_3; t_2,x_2 )| \\
& + |x_{\pm} (t_3; t_1,x_1) -  x_{\pm} (t_4; t_1,x_x )| 
\end{align*}
From \eqref{x-tt}, the first term of the right hand side is estimated by $ 2 (1+A_1 ^a)  (|x_1 - x_2| +  |t_1 - t_2 | )$.
From \eqref{int-x} and \eqref{u-pro}, the second term is estimated by $A_1 ^a |t_3 - t_4|$. Therefore, we have the desired inequality.
\end{proof}
Following lemma is used to show the boundedness of the derivatives of $R$ and $S$.
\begin{lemma} \label{x-d-lm}
Let $u \in C^1  ([0,T]\times \R) $. Suppose that \eqref{u-pro}  and  \eqref{u-pro2} hold. Then the characteristic curves $x_{\pm} (t; s,x)$ are differentiable with $x$ and $\partial_x x_{\pm} (t; s,x)$ satisfies that with $(t,x) \in [0,T] \times \R$ and $s \in [0,T]$ for small $T>0$
\begin{eqnarray}\label{lve-eq}
 \left\{  
\begin{array}{ll}
\dfrac{d}{ds}\partial_x x_{\pm} (s;t,x) =\pm a u^{a-1} u_x (t, x_{\pm} (s;t,x)) \partial_x x_{\pm} (s;t,x), \\
\partial_x x_{\pm} (t;t,x)=1
\end{array}
\right.
\end{eqnarray}
and
\begin{eqnarray} \label{x-d-es}
|\partial_x x_{\pm} (s;t,x)| \leq e^{C|t-s|},
\end{eqnarray}
where the positive constant $C$ is depending on $A_0, A_1$ and $A_2$. 
\end{lemma}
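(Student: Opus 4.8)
The plan is to apply the classical theorem on the $C^1$-dependence of an ODE's solution on its initial position, after checking its hypotheses. By \eqref{int-x}, $s\mapsto x_{\pm}(s;t,x)$ is the solution, started from $x$ at time $s=t$, of the nonautonomous ODE with vector field $g_{\pm}(\tau,y):=\pm u^a(\tau,y)$. Since $u\in C^1([0,T]\times\R)$ and, by \eqref{u-pro}, $u(\tau,y)\geq A_0\braket{y}^{-\al}>0$ everywhere, the map $\theta\mapsto\theta^a$ is smooth on the (strictly positive) range of $u$; hence $g_{\pm}$ is continuous in $(\tau,y)$ and $C^1$ in $y$ with $\partial_y g_{\pm}=\pm a\,u^{a-1}u_x$ (for $a=0$ it is the constant field $\pm1$). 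This is the one place where the positivity of $u$ is genuinely used: for $0\le a<1$ the function $\theta\mapsto\theta^a$ is not $C^1$ at $\theta=0$.

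Combining \eqref{u-pro} and \eqref{u-pro2} gives the a priori bound
\begin{eqnarray*}
|u^{a-1}u_x(\tau,y)|=\frac{u^a|u_x(\tau,y)|}{u(\tau,y)}\leq\frac{A_2\braket{y}^{-\al}}{A_0\braket{y}^{-\al}}=\frac{A_2}{A_0},
\end{eqnarray*}
i.e. the boundedness of $u^{a-1}u_x$ already used in the proof of Lemma \ref{lip-x-lm}. Thus $\partial_y g_{\pm}$ is bounded by $aA_2/A_0$ and $|g_{\pm}|\le A_1^a$, so $g_{\pm}$ is globally Lipschitz in $y$ uniformly in $\tau$ and $x_{\pm}(s;t,x)$ is well defined for all $s,t\in[0,T]$, $x\in\R$. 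The smooth-dependence theorem then yields that $x\mapsto x_{\pm}(s;t,x)$ is $C^1$ and that $w_{\pm}(s):=\partial_x x_{\pm}(s;t,x)$ solves the variational equation obtained by differentiating \eqref{int-x} in $x$ --- which is exactly \eqref{lve-eq} --- with $w_{\pm}(t)=1$.

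Finally, \eqref{x-d-es} drops out by solving this scalar linear ODE,
\begin{eqnarray*}
w_{\pm}(s)=\exp\left(\pm a\int_t^s u^{a-1}u_x\bigl(\tau,x_{\pm}(\tau;t,x)\bigr)\,d\tau\right),
\end{eqnarray*}
and invoking the bound above: $|w_{\pm}(s)|\le e^{C|t-s|}$ with $C=aA_2/A_0$. I do not anticipate a real obstacle. If one prefers to avoid citing the abstract differentiability theorem, an equivalent elementary route is to set $\delta_h(s):=h^{-1}\bigl(x_{\pm}(s;t,x+h)-x_{\pm}(s;t,x)\bigr)$, use \eqref{int-x}, the Lipschitz bound on $g_{\pm}$ and the Gronwall inequality to show $\{\delta_h\}$ is uniformly bounded and equicontinuous on $[0,T]$, extract a uniform limit along a subsequence by the Arzel\'a--Ascoli theorem, and identify it with the unique solution of \eqref{lve-eq}; uniqueness for that linear problem forces $\delta_h$ to converge as $h\to0$, which is the asserted differentiability, and passing to the limit in the bound gives \eqref{x-d-es}.
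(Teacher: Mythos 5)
Your proposal is correct and follows essentially the same route as the paper: both invoke the standard $C^1$-dependence theorem to get \eqref{lve-eq}, and both reduce \eqref{x-d-es} to the bound $|u^{a-1}u_x|\le A_2/A_0$ coming from \eqref{u-pro} and \eqref{u-pro2}. The only (harmless) difference is that you integrate the scalar linear variational equation explicitly, whereas the paper applies the Gronwall inequality to the integrated form; your version has the small advantage of making clear that no smallness of $T$ is actually needed for \eqref{x-d-es}.
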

\begin{proof}
The differentiability of $ x_{\pm} (s;t,x)$ and \eqref{lve-eq} are  well-known as a basic fact(e.g. textbook of Sideris \cite{sid}).
We estimate $\partial_x x_{-} (s;t,x)$. We only show \eqref{x-d-es} with the case that $t\geq s$. From \eqref{u-pro} and the boundedness of $\braket{x}^\al u^a u_x$, we obtain that
\begin{align*}
|\partial_x x_{\pm} (s;t,x) | \leq & 1 + a  \int_s  ^t |u^{a-1} u_x | |\partial_x x_{\pm} (\tau ;t,x) | d\tau  \\
\leq & 1+ C  \int_s ^t   |\partial_x x_{\pm} (\tau ;t,x) | d\tau .
\end{align*}
Hence, from the Gronwall inequality, we obtain \eqref{x-d-es} for small $T$.
\end{proof}

\section{Proof of the main theorem}
As in Introduction, we set
\begin{eqnarray*}
\gamma = \left\{
\begin{array}{ll}
0, & a \geq 1, \\
(1-a)\al, &  \mbox{otherwise}.
\end{array}
\right.
\end{eqnarray*}
We treat functions satisfying the following conditions for $\al, \bt \geq 0$  with $\al \leq \bt$ such that
\begin{align} 
 A_0 \braket{x}^{-\al} \leq  f(t,x) \leq  A_1  \label{c1}
\end{align}
and
\begin{align} 
f^a (t,x) |f_x (t,x)| \leq  A_2  \braket{x}^{-\bt}, \label{c1-2} \\
|f_t (t,x)| \leq  A_3  \braket{x}^{-\bt} \label{c1-3}
\end{align}
or
\begin{align}
 |f(t,x)|  \leq A_3 \braket{x}^{-\bt}  \label{c2},
\end{align}
and
\begin{align}
|f_x (t,x)|   \leq A_4 \braket{x}^{-\gamma}, \label{c3} \\
 |f_t(t,x)|  \leq A_5 \braket{x}^{-\gamma}, \label{c4}
\end{align}
where $A_j$ are positive constants with $j=1,\cdots,5$.
We define sets of $C^1$ functions $X_\al$, $Y_{\bt, 1}$, $Y_{\bt,2}$  as follows:
\begin{eqnarray*}
X_\al=\{ f \in C^1 \cap C_b \ | \ f(0,x)=u_0(x) \ \mbox{and \ \eqref{c1}, \eqref{c1-2} and  \eqref{c1-3} hold.}  \}, \\
Y_{\bt, 1}=\{ f \in C^1 _b \ |  \ f(0,x)=R_0(x)  \ \mbox{and \ \eqref{c2}, \eqref{c3} and  \eqref{c4} hold.}  \},\\
Y_{\bt, 2}=\{ f \in C^1 _b \ |   \ f(0,x)=S_0(x)  \ \mbox{and \ \eqref{c2}, \eqref{c3} and  \eqref{c4} hold.}  \},
\end{eqnarray*}
where given functions $(u_0, R_0 ,S_0)$ belongs to $C^1 \cap C_b \times C^1 _b \times C^1 _b$.
For given functions $(v, \bar{R}, \bar{S}) \in X_\al  \times Y_{\beta, 1}  \times Y_{\beta, 2} $, we consider the first order linear hyperbolic equation:
\begin{eqnarray}
 \left\{  
\begin{array}{ll}
 R_t -v^a R_x=N_1(v, \bR,\bS) + L(v, \bR,\bS), \\
 S_t +  v^a S_x =N_2(v, \bR, \bS) + L(v, \bR,\bS) \label{lfs}
\end{array}
\right.
\end{eqnarray} 
with initial condition $(R(0,x), S(0,x))=(R_0 , S_0) \in C^1 _b \times C^1 _b$. We set
\begin{align} \label{def-u0}
u = u_0 (x) + \int_0 ^t \frac{R+S}{2} (s,x) ds.
\end{align}
We find  that \eqref{lfs} with $C^1 $ initial data has unique and time-global solutions such that
$ R,S \in C^1 ([0,T]\times \R) \cap C_b ([0,T]\times \R)$ with arbitrary fixed $T>0$ from the method of characteristic. From \eqref{def-u0}, it holds that $u \in C^1 \cap C_b$.
Namely we can define the map 
$$
\Phi  :  X_\al    \times Y_{\beta, 1}   \times Y_{\beta, 2}    \rightarrow  C^1   \times C^1  \times C^1 
$$ such that
$\Phi (v,\bar{R}, \bar{S}) =(u,R,S) $.
We take four positive numbers  $A_0, A_1, A_3, A_4$  satisfying that
\begin{align} 
2 A_0 \braket{x}^{-\al} \leq  u_0 (x) \leq \frac{A_1}{2}, \label{0con2} \\ 
\|\braket{x}^\beta R_0 \|_{L^\infty} + \| \braket{x}^\beta S_0 \|_{L^\infty} \leq \frac{A_3}{4},\label{0con3} \\ 
\| \braket{x}^\gamma R'_0\|_{L^\infty} + \| \braket{x}^\gamma S'_0\|_{L^\infty} \leq \frac{A_4}{8}. \label{0con4} 
\end{align} 
The constants $A_2$ and $A_5$ in \eqref{c4} will be taken later.
Moreover, we  assume  that
\begin{align}\label{0con5}
\|\braket{x}^{\bt} u_0 ^a u' _0 \|_{L^\infty} \leq B_1 
\end{align}
for a positive constant $B_1$.
In the following, we show that $(u,R,S) \in  X_\al  \times Y_{\beta, 1} \times Y_{\beta, 2} $  and $\Phi$ is a contraction mapping
in the topology of $L^\infty$  for sufficient small $T$. $X_\al$ and $Y_{\beta, j}$ with $j=1,2$ are not closed set of  $L^\infty$ space. Nevertheless it is possible to show that 
the fixed point belongs to $X_\al  \times Y_{\beta, 1} \times Y_{\beta, 2}$.  Furthermore we will show that  the regularity is improved as $  u \in C^2 $.
 First we show the following proposition.
 \begin{Prop} \label{phi-def}
Let $(u_0, R_0, S_0) \in \ C^1 \times  C^1 \times C^1$ satisfying \eqref{0con2}-\eqref{0con4} and \eqref{0con5}. Suppose that $v,\bar{R}, \bar{S} \in X_\al  \times Y_{\beta, 1}  \times Y_{\beta, 2} $. Then $\Phi(v,\bar{R}, \bar{S} )=(u,R,S) \in X_\al \times Y_{\beta, 1}   \times Y_{\beta, 2} $ 
for sufficiently small $T>0$. 
\end{Prop}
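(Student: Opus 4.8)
The plan is to show that $\Phi$ maps $X_\al\times Y_{\bt,1}\times Y_{\bt,2}$ into itself by propagating the defining weighted bounds through the linear system \eqref{lfs} via the method of characteristics, using the curve estimates of Section~2. Since $v\in X_\al$, it satisfies the hypotheses \eqref{u-pro}--\eqref{u-pro2} (the second with $\braket{x}^{-\bt}\le\braket{x}^{-\al}$), so Lemmas \ref{lip-x-lm} and \ref{x-d-lm} and the bound \eqref{x-es} apply to the characteristic curves of $v$. The ($C^1$, time-global) solution $(R,S)$ of \eqref{lfs} then satisfies along these curves
\begin{align*}
R(t,x)&=R_0\big(x_-(0;t,x)\big)+\int_0^t\big(N_1+L\big)(v,\bR,\bS)\big(\tau,x_-(\tau;t,x)\big)\,d\tau,\\
S(t,x)&=S_0\big(x_+(0;t,x)\big)+\int_0^t\big(N_2+L\big)(v,\bR,\bS)\big(\tau,x_+(\tau;t,x)\big)\,d\tau,
\end{align*}
and differentiating the first of these in $x$ (and using $\partial_x x_-$ from Lemma \ref{x-d-lm}) gives a parallel identity for $R_x$, likewise for $S_x$.

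The first technical core is a set of pointwise ``Levi-type'' bounds for the source terms, valid whenever $(v,\bR,\bS)$ satisfy \eqref{c1}--\eqref{c4}, $F$ satisfies \eqref{f-con}--\eqref{f-con2}, and $\al\le\bt$, $a\al\le\bt$ hold: namely $|N_1|+|N_2|+|L|\le C\braket{x}^{-\bt}$ and $|\partial_x N_1|+|\partial_x N_2|+|\partial_x L|\le C\braket{x}^{-\gamma}$. The key observation is structural: in $N_i=\tfrac{a}{2v}(\cdots)$ the apparently singular factor $1/v$ is dominated, via $v\ge A_0\braket{x}^{-\al}$, by the quadratic decay of $\bR,\bS$; in $L=\tfrac{F(v)}{2v^a}(\bR-\bS)$ the factor $1/v^a$ is \emph{exactly} cancelled by $|F(v)|\le C_Kv^a$ (and $|F'(v)|\le C_Kv^{a-1}$ in $\partial_xL$); and the arithmetic of exponents appearing in $\partial_xN_i$ and $\partial_xL$ closes precisely because of \eqref{ab1}--\eqref{ab2}. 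Feeding these bounds into the representations above, together with \eqref{x-es}, the weight comparison $\braket{x_\pm(\tau;t,x)}^{-\sigma}\le C(T)\braket{x}^{-\sigma}$ with $C(T)\to1$ as $T\to0$, and $|\partial_x x_\pm(\tau;t,x)|\le e^{CT}$ from Lemma \ref{x-d-lm}, while using the slack factors $\tfrac14,\tfrac18$ reserved in \eqref{0con3}--\eqref{0con4}, one obtains \eqref{c2} and \eqref{c3} for both $R$ and $S$ with the \emph{same} constants $A_3,A_4$, provided $T$ is small. Then \eqref{c4} is immediate from \eqref{lfs} itself: $|R_t|\le A_1^a|R_x|+|N_1|+|L|\le(A_1^aA_4+C)\braket{x}^{-\gamma}$ (using $\braket{x}^{-\bt}\le\braket{x}^{-\gamma}$), so it suffices to set $A_5:=A_1^aA_4+C$. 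Hence $R\in Y_{\bt,1}$ and $S\in Y_{\bt,2}$.

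It remains to check $u\in X_\al$ for $u=u_0+\int_0^t\tfrac{R+S}{2}\,ds$ (its ambient $C^1\cap C_b$ regularity being clear from the bounds just obtained). Here \eqref{c1} and \eqref{c1-3} are immediate: $|u-u_0|\le TA_3\braket{x}^{-\bt}\le TA_3\braket{x}^{-\al}$, which with \eqref{0con2} gives \eqref{c1} for small $T$, and $|u_t|=\tfrac12|R+S|\le A_3\braket{x}^{-\bt}$ is exactly \eqref{c1-3}. The genuinely delicate point---and the step I expect to be the main obstacle---is the gradient bound \eqref{c1-2}: estimating $u_x=u_0'+\int_0^t\tfrac{R_x+S_x}{2}\,ds$ term by term only gives the decay $\braket{x}^{-\gamma}$, which is too weak since $\gamma\le\bt$. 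To repair this, use the first and third equations of \eqref{lfs} to rewrite $\tfrac{R_x+S_x}{2}=\tfrac{1}{2v^a}(R_t-S_t)-\tfrac{N_1-N_2}{2v^a}$, integrate by parts in $t$, and invoke the identity $u_0'=\dfrac{R_0-S_0}{2u_0^a}$ (valid because $R_0=u_1+u_0^au_0'$, $S_0=u_1-u_0^au_0'$, and $u_0>0$ by \eqref{ini-con}): the boundary term at $s=0$ then cancels the leading $u_0'$ exactly, leaving
\begin{align*}
u^au_x=\frac{u^a}{v^a}\cdot\frac{R-S}{2}+u^a\int_0^t\Big(\frac{a\,v_t\,(R-S)}{2v^{a+1}}-\frac{N_1-N_2}{2v^a}\Big)\,ds.
\end{align*}
In this formula $u^a/v^a=(u/v)^a\to1$ as $T\to0$ (since $|u-v|\le 2TA_3\braket{x}^{-\bt}$ while $v\ge A_0\braket{x}^{-\al}$), $|v_t/v|\le C\braket{x}^{\al-\bt}\le C$, and---writing $N_1-N_2=\tfrac{a}{2v}(\bR-\bS)(\bR+\bS)$---the remaining factors $1/v$ are again absorbed by $v\ge A_0\braket{x}^{-\al}$ against the decay of $\bR\pm\bS$; regrouping the factors in this way, every term is $\le C\braket{x}^{-\bt}$, the leading one being $\le(1+\varepsilon)A_3\braket{x}^{-\bt}$ for $T$ small. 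Choosing $A_2:=2A_3$ and $T$ small enough, \eqref{c1-2} holds, so $u\in X_\al$ and the proof is complete. Apart from this identity, the only real care the argument requires is the bookkeeping that every estimate returns the \emph{same} constants $A_0,A_1,A_3,A_4$ (the freshly chosen $A_2,A_5$ aside)---which is exactly what the slack built into \eqref{0con2}--\eqref{0con5} is designed to provide.
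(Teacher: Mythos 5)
Your proof follows the paper's own argument almost step for step: the weighted $L^\infty$ bounds for $R,S$ along the characteristics of $v$ (absorbing $1/v$ into the decay of $\bar R,\bar S$ via \eqref{c1} and cancelling $1/v^a$ against $F$ via \eqref{f-con}), the differentiated integral identities for $R_x,S_x$ controlled by Lemma \ref{x-d-lm} and the exponent arithmetic coming from \eqref{ab1}--\eqref{ab2} and \eqref{f-con2}, the a posteriori choice of $A_5$ from the equation itself, and the same integration-by-parts device in time to upgrade the decay of $\int_0^t(R_x+S_x)\,ds$ from $\braket{x}^{-\gamma}$ to $\braket{x}^{-\beta}$. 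In substance this is the paper's proof, and the estimates you sketch do close.

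The one place you deviate is the treatment of $u_0'$ inside $u^au_x$. You cancel it against the boundary term $\frac{R_0-S_0}{2u_0^a}$ produced by the integration by parts, invoking the identity $u_0'=(R_0-S_0)/(2u_0^a)$. That identity is \emph{not} among the hypotheses of Proposition \ref{phi-def}: the compatibility \eqref{fs20} between $(u_0,R_0,S_0)$ is only imposed later, just before Proposition \ref{last-pr}, and the paper explicitly states that no relation between $u_0$ and $(R_0,S_0)$ is assumed up to that point. This is precisely why hypothesis \eqref{0con5} is listed: the paper bounds $\bigl(u_0+\int_0^t\tfrac{R+S}{2}\,ds\bigr)^au_0'$ directly by $C\braket{x}^{-\beta}$ using $\|\braket{x}^{\beta}u_0^au_0'\|_{L^\infty}\le B_1$, and bounds the boundary term $\frac{R_0-S_0}{2u_0^a}$ separately via \eqref{0con3} together with the comparability of $v$ with $u_0$ from \eqref{vv0}. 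Your version therefore proves the proposition only under the additional compatibility assumption; this suffices for Theorem \ref{main}, where the compatibility does hold, but it narrows the stated proposition, leaves \eqref{0con5} as a dead hypothesis, and your choice $A_2=2A_3$ would in general have to be replaced by a constant depending also on $B_1$. The repair is immediate: estimate the two terms separately as the paper does; both are $O(\braket{x}^{-\beta})$ with constants independent of $A_2$, so no circularity is introduced.
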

\begin{proof}
 From the method of characteristic, we can see that the solution of  \eqref{lfs} can be written by
\begin{eqnarray}
\left\{  
\begin{array}{ll} 
R(t,x) = R(0,x_- (0)) + \int_0 ^t N_1(v, \bR,\bS)(s,x_- (s))+ L(v, \bR,\bS)(s,x_- (s)) ds,\\
S(t,x) = S(0,x_+ (0)) + \int_0 ^t N_2(v, \bR, \bS)(s,x_+ (s)) + L(v, \bR,\bS)(s,x_+ (s))ds,\label{lfs2}
\end{array}
 \right.
\end{eqnarray} 
where the characteristic curves for the linear equation \eqref{lfs} are defined as follows:
\begin{eqnarray*}
\dfrac{d}{dt} x_{\pm} (t) =\pm v^a (t,x_{\pm} (t))
\end{eqnarray*}
with initial data $x_{\pm}(t)=x$. From this expression, we have that $(u,R,S) \in  C^1 \times   C^1 \times   C^1$.
 Now we estimate $\|\braket{x}^\bt R\|_{L^\infty _T L^\infty}$.  From \eqref{x-es}, if $T$ is small, we have
\begin{eqnarray*}
\frac{\braket{x}^\beta}{2} \leq \braket{x_{-} (s;t,x)}^\beta \leq 2 \braket{x}^\beta .
\end{eqnarray*}
Using these inequalities, from \eqref{lfs2}, we have that if $T$ is small
\begin{align*}
|\braket{x}^\beta R(t,x)| \leq &  2 \|\braket{x}^\beta R (0,\cdot )\|_{L^\infty} \\
& +\int_0 ^t  \dfrac{\braket{x_{-} (s)}^\beta}{v} |\bR^2 -\bR\bS|  + \braket{x_{-} (s)}^\beta \frac{F(v)}{v^a}|\bR-\bS|ds \\
\leq &   \frac{A_3}{2} +\int_0 ^t  \dfrac{\braket{x_{-} (s)}^{\al + \beta}}{\braket{x_{-} (s)}^\al v} |\bR^2 -\bR\bS| +  C\braket{x_{-} (s)}^\beta |\bR-\bS|ds \\
\leq   &  \frac{A_3}{2} + CT \left( \| \braket{x}^\bt  \bR \|_{L^\infty _T L^\infty} \| \braket{x}^\bt  \bS (s)\|_{L^\infty _T L^\infty} + \|\braket{x}^\bt   \bS \|^2_{L^\infty _T L^\infty} \right) \\
 &+ C T (\| \braket{x}^\bt  \bR \|_{L^\infty _T L^\infty} + \|\braket{x}^\bt   \bS \|_{L^\infty _T L^\infty})  \\
& \leq     \frac{A_3}{2} + C T,
\end{align*}
where $C$ is a positive constant depending on $A_0, A_1, A_3$    and the assumptions that $\beta \geq \al$ and \eqref{f-con} are used.
Hence we obtain that for sufficiently small $T$
 \begin{eqnarray} \label{R-es}
\|\braket{x}^\bt R\|_{L^\infty _T L^\infty} \leq   A_3 .
\end{eqnarray}
Similarly for $S$, we have that  
 \begin{eqnarray} \label{S-es}
\|\braket{x}^\bt S\|_{L^\infty _T L^\infty}\leq   A_3.
\end{eqnarray}
Next we estimate $\| \braket{x}^\gamma  R_x \|_{L^\infty _T L^\infty}$ and $\| \braket{x}^\gamma  S_x \|_{L^\infty _T L^\infty}$. 
Differentiating the both side of the equations  \eqref{lfs} with  $x$, we can obtain integral equations for  $R_x$ and $S_x$ as follows:
\begin{align}
V (t,x) & = V_0 (x_- (0;t,x)) \partial_x x_{-} (0;t,x)  \notag \\
 + &\int_0 ^t \partial_x x_{-} (s;t,x) \left(N_{1u} v_x + N_{1R} \bV + N_{1S}  \bW\right) (t,x_- (s;t,x)) ds \notag \\
 +&  \int_0 ^t \partial_x x_{-} (s;t,x) \left(L_u v_x  + L_R \bV + L_S \bW \right) (t,x_- (s;t,x)) ds \label{V-eql}
\end{align}
and
\begin{align}
W (t,x) & = W_0 (x_- (0;t,x)) \partial_x x_{+} (0;t,x)  \notag \\
 + &\int_0 ^t \partial_x x_{+} (s;t,x) \left(N_{2u} v_x + N_{2R} W + N_{2S}  V\right) (t,x_+ (s;t,x)) ds \notag \\
 +&  \int_0 ^t \partial_x x_{+} (s;t,x) \left(L_u v_x  + L_R \bV + L_S \bW \right) (t,x_+ (s;t,x)) ds, \label{W-eql}
\end{align}
where we denote $\bV=\bR_x$ and $\bW=\bS_x$ and $(V_0,W_0) =(R'_0(\cdot ), S'_0(\cdot )) $ and  $N_{ju}, N_{jS}, N_{jR}$ ($j=1,2$) are partial derivatives of $N_j=N_j (u,R,S)$ with $u, S, R$ respectively (the same manners  are also used for $L$). 
From Lemma \ref{x-d-lm}, we obtain  that $| \partial_x x_{+} (0;t,x)|$ is bounded by $2$, if $T$ is small (note that smallness of $T$ depends on $A_0, A_1$ and  $A_2$). Hence we have that
\begin{align}
|W(t,x)| \leq & \frac{A_4\braket{x}^{-\gamma}}{2} + 2\int_0 ^t \left|N_{1u} v_x + N_{1R} \bV + N_{1S}  \bW\right|  (t,x_- (s;t,x)) ds  \notag \\
&+ 2\int_0 ^t  |L_u v_x  + L_R \bV + L_S \bW | (t,x_- (s;t,x)) ds \label{es-vw}.
\end{align}
From \eqref{0con2}-\eqref{0con4},  $| N_{1R} \bV|$ and $|N_{1S}  \bW|$ are trivially estimated as
\begin{align*}
| N_{1R} \bV|+ |N_{1S}  \bW| \leq & \frac{C}{v}(|\bR| + |\bS|)(|\bV| + |\bW|) \\
\leq & C\braket{x}^{\al -\bt -\gamma}\\
\leq & C \braket{x}^{ -\gamma}.
\end{align*}
From \eqref{0con2}-\eqref{0con4} and \eqref{ab1} and \eqref{ab2}, we have that for $|N_{1u} v_x|$
\begin{align*}
|N_{1u} v_x| \leq & \frac{Cv^a v_x}{u^{2+a}}(|\bR|^2 + |\bS|^2) \\
\leq & \frac{C}{u^{2+a}}(|\bR|^3 + |\bS|^3) \\
\leq & C\braket{x}^{(2+a)\al -3\bt}\\
\leq & C\braket{x}^{-\gamma}.
\end{align*}
From  \eqref{ab1}, \eqref{ab2}, \eqref{f-con} and \eqref{f-con2}, we estimate $L_u v_x$ as
\begin{align*}
|L_u v_x| \leq & C\left(\frac{|F(v) v_x|}{v^{a+1}} + \frac{|F'(v) v_x|}{v^{a}} \right) \\ 
\leq & \frac{C(|R|+|S|)}{v^a} \\
\leq & C\braket{x}^{\al a - \bt}\\
\leq & C \braket{x}^{-\gamma}.
\end{align*}
\eqref{f-con} and \eqref{0con4} directly implies that
\begin{align*}
 |L_R \bV| + |L_S \bW | \leq C\braket{x}^{-\gamma}.
\end{align*}
Applying these estimates to \eqref{es-vw},  we obtain that with small $T$
\begin{align}
\|\braket{x}^{\gamma} W  \|_{L^\infty _T L^\infty} \leq   \frac{A_4}{2} . \label{w-es}
\end{align}
Similarly 
\begin{align}
\|\braket{x}^{\gamma} V  \|_{L^\infty _T L^\infty}  \leq \frac{A_4}{2} . \label{v-es}
\end{align}
Estimates of $R_t$ and  $S_t$ are obtained from \eqref{lfs}. In fact, we have that
\begin{align}
|R_t (t,x)| + |S_t (t,x)|\leq & |v^a R_x| + |v^a S_x|+ |N_1(v \bR, \bS)| \notag \\
&+ |N_2(v \bR, \bS)| + 2|L(v \bR, \bS)|  \notag\\
\leq & C\braket{x}^{-\gamma} + C\braket{x}^{-\beta}\notag  \\
\leq & C_A\braket{x}^{-\gamma}, \label{rst-es}
\end{align} 
where $C_A$ is a positive constant depending on $A_1$, $A_3$ and $A_4$ (independent of $A_5$).
Here we choose $A_5$ as $A_5 =C_A$. From  the above estimates, we have that $(R,S) \in Y_{\bt, 1} \times Y_{\bt, 2}$.
Next we show that $u \in X_\al.$
From \eqref{def-u0}, \eqref{R-es} and \eqref{S-es}, it follows for sufficiently small $T$ that
\begin{align}
\braket{x}^{\al} u (t) \geq & \braket{x}^{\al} u_0 (x) - \int_0 ^t\dfrac{ \braket{x}^{\al}(|R|+|S|)}{2} ds  \notag \\
\geq & 2A_0 - \int_0 ^t  \dfrac{ \braket{x}^{\bt}(|R|+|S|)}{2} ds  \notag \\
\geq & 2A_0-   T A_3 \notag  \\
\geq  & A_0 . \label{u-es}
\end{align}
Similarly we can easily check that $\|  u \|_{L^\infty _T L^\infty} \leq A_1 $, if $T$ is small. 
Next we show that
\begin{align} \label{a2-con}
\| \braket{x}^\beta u^a u_x \|_{L^\infty _T L^\infty} \leq A_2,
\end{align}
\eqref{def-u0} directly implies that
\begin{align}
u^a  u_x =& \left(u_0 + \int_0 ^t \frac{R+S}{2} ds \right)^a \notag  u'_0  \\
+ & \left(u_0 + \int_0 ^t \frac{R+S}{2} ds \right)^a \int_0 ^t \frac{R_x + S_x}{2} ds .\label{u-ese}
\end{align}
From \eqref{0con2} and the boundedness of $\braket{x}^{\bt}u^a _0 |u' _0|$  the first term of \eqref{u-ese} is estimated as
\begin{align*}
\left(u_0 + \int_0 ^t \frac{R+S}{2} ds \right)^a  |u'_0| \leq  & 2^a (u^a _0   + C T^a \braket{x}^{-a\bt}) |u'_0|  \\
\leq & (2^2 + CT^a) u^a _0 |u' _0| \\
\leq & C_{1,A} \braket{x}^{-\bt},
\end{align*}
where  we note that the positive constant $C_{1,A}$ does not depend on $A_2$.
Deducing  $R_x +S_x $ via \eqref{lfs}, we also obtain
\begin{align*}
\left(u_0 + \int_0 ^t \frac{R+S}{2} ds \right)^a \left|\int_0 ^t\frac{R_x + S_x}{2} ds \right| \\
=\left(u_0 + \int_0 ^t \frac{R+S}{2} ds \right)^a  \left| \int_0 ^t\frac{1}{2v^a}(R_t - S_t + N_2 - N_1)  ds \right| \\
\leq 2^a (u^a _0 + T^a \braket{x}^{-a\bt}) \left| \int_0 ^t\frac{1}{2v^a} (R_t - S_t + N_2 - N_1  )ds\right|  .
\end{align*}
Since the spatial decay of $R_t$ and $S_t$ is not enough to show \eqref{a2-con}, we need to change this term.
From the integration by parts and the property of $X_\al$ that $v_0 =u_0$, we obtain that
\begin{align}
\int_0 ^t\frac{R_t - S_t}{2v^a} ds=\frac{R-S}{2v^a} - \frac{R_0-S_0}{2u^a _0} + \int_0 ^t \frac{a(R-S)v_t}{2v^{a+1}} ds. \label{rs-hen}
\end{align}
While, from \eqref{c1-3}, we have that
\begin{eqnarray*}
 u_0 -  A_3 T \braket{x}^{-\bt} \leq v\leq u_0 + A_3 T \braket{x}^{-\bt}.
\end{eqnarray*}
Hence, with the help of \eqref{c1}, it holds that
\begin{align} \label{vv0}
\frac{1}{2} \leq 1 - A_3 A_0T \leq \frac{v}{u_0} \leq 1 + A_3 A_0T \leq 2
\end{align}
for small $T$. The third term of the right hand side in \eqref{rs-hen} is estimated as 
\begin{align}
\int_0 ^t \frac{a(R-S)|v_t|}{2v^{a+1}} ds \leq  &C \int_0 ^t \frac{ \braket{x}^{\al}(R-S)|v_t|}{v^{a}} ds \notag \\
\leq C & \braket{x}^{-\bt} \int_0 ^t  v^{-a} ds. \label{rs-hen2}
\end{align}
Applying \eqref{vv0} and \eqref{rs-hen2} to \eqref{rs-hen}, we  have 
\begin{eqnarray*}
 2^a (u^a _0 + T^a \braket{x}^{-a\bt}) \left| \int_0 ^t\frac{1}{2v^a}( R_t - S_t ) ds\right| \leq C_{2,A}  \braket{x}^{-\bt}
\end{eqnarray*}
and similarly 
\begin{eqnarray*}
2^a (u^a _0 + T^a \braket{x}^{-a\bt}) \left| \int_0 ^t\frac{1}{2v^a}( N_2 - N_1 )   ds\right| \leq C_{3,A}  \braket{x}^{-\bt},
\end{eqnarray*}
where  positive constants $C_{2,A}$ and $C_{3,A}$ are independent of $A_2$.
\begin{eqnarray*}
\| \braket{x}^{-\bt} u^a u_x \|_{L^\infty _T L^\infty} \leq C_{1,A}+ C_{2,A} + C_{3,A}.
\end{eqnarray*}
Taking $A_5 = C_{1,A}+ C_{2,A} + C_{3,A}$, we obtain \eqref{a2-con}.  \eqref{def-u0} and \eqref{c2} directly yield that
\begin{align*}
\| \braket{x}^{-\bt}  u_t \|_{L^\infty _T L^\infty}  \leq & \left\| \frac{\braket{x}^{-\bt} (R + S)}{2}  \right\|_{L^\infty _T L^\infty}  \\
\leq & A_3.
\end{align*}
Therefore we have that $(u,R,S) \in X_\al  \times Y_{\beta, 1} \times Y_{\beta, 2}$.
In the end of the proof,  we show that $(u,R,S)$ is Lipschitz continuous. From  \eqref{w-es}, \eqref{v-es} and \eqref{rst-es}, we can obviously check that  $R$ and $S$ satisfies the following uniform Lipschitz estimate:
\begin{align} \label{rs-lip}
|R(t,x)-R(s,y)|+|S(t,x)-S(s,y)|\leq 2(A_4 + A_5)(|t-s|+|x-y|).
\end{align}
Next we check that $u$ is  Lipschitz continuous.
From \eqref{a2-con} and the condition that $\beta \geq a \alpha$, we have that
\begin{align*}
|u(t,x) - u (t,y)|\leq & \left|\int^x _y |u_x (t,z)| dz \right|\\
\leq & C \left| \int^x _y \braket{z}^{a\al} u^a |u_x| (t,z)dz \right| \\
\leq & C|x-y|.
\end{align*} 
Combining this estimate with  the boundedness of $R$ and $S$, we have from \eqref{def-u0}  that
$u$ is  Lipschitz continuous  such that
for any $t_1 ,t_2 \in [0,T]$, $x_1, x_2 \in \R$ with
\begin{align} \label{m1}
|u(t_1,x_1) - u(t_2,x_2)| \leq C(|t_1 - t_2| + |x_1 - x_2|),
\end{align}
where C is a positive constant depending on $A_0, A_3, A_4$ and  $A_5$. These additional properties are used to show that the fixed point of $\Phi$ satisfies integral equations.
\end{proof}
 \begin{Prop} \label{pr-con}
Under the same assumptions on \eqref{phi-def}, $\Phi$ is a contraction mapping in the topology of $L^\infty$-norm with  small $T>0$. Namely, If $T>0$ is small, then there exists a constant $c \in (0,1)$ such that  $\Phi$ satisfies that 
\begin{eqnarray*}
\|u_1 - u_2  \|_{L^\infty _T L^\infty} + \| R_1 - R_2  \|_{L^\infty _T L^\infty} +\| S_1 - S_2  \|_{L^\infty _T L^\infty} \\
\leq c \left(  \|v_1 - v_2  \|_{L^\infty _T L^\infty} + \| \bR_1 - \bR_2  \|_{L^\infty _T L^\infty} +\| \bS_1 - \bS_2  \|_{L^\infty _T L^\infty}    \right),
\end{eqnarray*}
where  $(u_j ,R_j , S_j) = \Phi (v_j, \bar{R}_j, \bar{S}_j)$ with $j=1, 2$.
\end{Prop}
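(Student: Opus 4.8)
The plan is to estimate the three differences $R_1-R_2$, $S_1-S_2$, $u_1-u_2$ directly from the integral representations \eqref{lfs2} and the definition \eqref{def-u0}, reducing everything to three ingredients: a \emph{weighted} Lipschitz bound for the difference of the two families of characteristic curves attached to $v_1^a$ and $v_2^a$; a pointwise Lipschitz bound for $N_1,N_2,L$ in the arguments $(v,\bR,\bS)$; and a weighted bound for the $x$-derivatives of $N_1,N_2,L$. Throughout, all norms will be $\|\cdot\|_{L^\infty_T L^\infty}$ and $T$ is shrunk at the end.

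First I would treat the characteristics. Let $x^{(j)}_\pm(s;t,x)$ solve $\frac{d}{ds}x^{(j)}_\pm=\pm v_j^a(s,x^{(j)}_\pm)$ with $x^{(j)}_\pm(t;t,x)=x$. Subtracting the integral forms \eqref{int-x} for $j=1,2$ and splitting
\[
v_1^a(\tau,x^{(1)}_\pm)-v_2^a(\tau,x^{(2)}_\pm)=\big[v_1^a(\tau,x^{(1)}_\pm)-v_1^a(\tau,x^{(2)}_\pm)\big]+\big[v_1^a(\tau,x^{(2)}_\pm)-v_2^a(\tau,x^{(2)}_\pm)\big],
\]
the first bracket is $\le C|x^{(1)}_\pm-x^{(2)}_\pm|$ because $v^{a-1}v_x$ is bounded (this is \eqref{c1}--\eqref{c1-2} with $\al\le\bt$, exactly as in Lemma \ref{x-d-lm}), while for the second I would use the elementary inequality $|v_1^a-v_2^a|(\tau,y)\le C\braket{y}^{\gamma}|v_1-v_2|(\tau,y)$, which holds in every case: for $a\ge1$ the mean value theorem and $v_j\le A_1$ give it with $\gamma=0$, and for $0\le a<1$ the lower bound $v_j\ge A_0\braket{y}^{-\al}$ produces precisely the factor $\braket{y}^{(1-a)\al}=\braket{y}^{\gamma}$. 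Using \eqref{x-es} to replace $\braket{x^{(2)}_\pm(\tau)}$ by $\braket{x}$ up to a constant, Gronwall's inequality then gives, for small $T$,
\[
|x^{(1)}_\pm(s;t,x)-x^{(2)}_\pm(s;t,x)|\le C T\,\braket{x}^{\gamma}\,\|v_1-v_2\|,\qquad s,t\in[0,T].
\]

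The second step is the two Lipschitz-type facts for the nonlinearities. Using $v_j\ge A_0\braket{x}^{-\al}$, the decay \eqref{c2} of $\bR_j,\bS_j$, and \eqref{f-con}--\eqref{f-con2}, and factoring differences in the standard way (e.g.\ $v_1^{-1}-v_2^{-1}=(v_2-v_1)/(v_1v_2)$, and $F(v_1)v_1^{-a}-F(v_2)v_2^{-a}=\int_{v_2}^{v_1}g'$ with $g(\theta)=F(\theta)\theta^{-a}$, $|g'(\theta)|\le C\theta^{-1}$), one obtains the pointwise bound
\[
\sum_{k=1,2}\big|N_k(v_1,\bR_1,\bS_1)-N_k(v_2,\bR_2,\bS_2)\big|+\big|L(v_1,\bR_1,\bS_1)-L(v_2,\bR_2,\bS_2)\big|\le C\big(\|v_1-v_2\|+\|\bR_1-\bR_2\|+\|\bS_1-\bS_2\|\big),
\]
the $\braket{x}^{2\al}$-growth coming from negative powers of $v$ being absorbed by the $\braket{x}^{-2\bt}$-decay of the quadratic terms via $\al\le\bt$. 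Separately, differentiating $N_k(v_2,\bR_2,\bS_2)$ and $L(v_2,\bR_2,\bS_2)$ in $x$ and inserting \eqref{c1}--\eqref{c4}, \eqref{ab1}--\eqref{ab2}, \eqref{f-con}--\eqref{f-con2}, all exponents should align so that $|\partial_x N_k(v_2,\bR_2,\bS_2)(s,\cdot)|+|\partial_x L(v_2,\bR_2,\bS_2)(s,\cdot)|\le C\braket{x}^{-\gamma}$. I expect this to be the main obstacle: $\partial_x N_1$ contains terms of order $\braket{x}^{(a+2)\al-3\bt}$ and $\braket{x}^{\al-\bt-\gamma}$, and $\partial_x L$ terms of order $\braket{x}^{(1+a)\al-2\bt}$, and one must verify that each such exponent is $\le-\gamma$; with the definition of $\gamma$ this reduces in each case exactly to $\al\le\bt$ (together with $a\al\le\bt$ when $a\ge1$), which is where \eqref{ab1}--\eqref{ab2} and the Levi-type conditions \eqref{f-con}--\eqref{f-con2} enter decisively.

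Finally I would assemble the estimate. In \eqref{lfs2} for $R_1-R_2$, insert and subtract $N_1(v_2,\bR_2,\bS_2)(s,x^{(1)}_-(s))$ and likewise for the $L$-term, and split the data term $R_0(x^{(1)}_-(0))-R_0(x^{(2)}_-(0))$ using $|R_0'|\le C\braket{x}^{-\gamma}$ from \eqref{0con4}. The ``argument'' contributions are bounded by the pointwise Lipschitz bound times $t\le T$; the ``base-point'' contributions are bounded by the weighted $\partial_x$-bound times the weighted characteristic difference, where the factors $\braket{x}^{-\gamma}$ and $\braket{x}^{\gamma}$ cancel and leave $CT\|v_1-v_2\|$; and the data contribution is $\|R_0'\|_{L^\infty}$ times the characteristic difference, where again the $\braket{x}^{\pm\gamma}$ cancel. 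Doing the same for $S_1-S_2$ yields
\[
\|R_1-R_2\|+\|S_1-S_2\|\le C T\big(\|v_1-v_2\|+\|\bR_1-\bR_2\|+\|\bS_1-\bS_2\|\big),
\]
whereupon \eqref{def-u0} gives $\|u_1-u_2\|\le\tfrac T2(\|R_1-R_2\|+\|S_1-S_2\|)\le C T^2\big(\|v_1-v_2\|+\|\bR_1-\bR_2\|+\|\bS_1-\bS_2\|\big)$. Adding the three bounds and choosing $T$ with $c:=CT+CT^2<1$ completes the proof.
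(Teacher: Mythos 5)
Your proof is correct, but it takes a genuinely different route from the paper's. The paper never compares two families of characteristics: it writes the equation for $\tilde R=R_1-R_2$ as a transport equation along the \emph{single} characteristic family of $v_1^a$, so the data term vanishes identically and the mismatch of the two transport operators reappears as the commutator source $(v_1^a-v_2^a)R_{2x}$; the only weighted cancellation then needed is $|v_1^a-v_2^a|\le C\braket{x}^{\gamma}|v_1-v_2|$ against $|R_{2x}|\le A_4\braket{x}^{-\gamma}$, the latter already supplied by \eqref{v-es} in Proposition \ref{phi-def}. You instead keep both families $x^{(j)}_\pm$, prove the weighted Gronwall bound $|x^{(1)}_\pm-x^{(2)}_\pm|\le CT\braket{x}^{\gamma}\|v_1-v_2\|$, and then must pay for the $\braket{x}^{\gamma}$ growth three times over, with $\braket{x}^{-\gamma}$ bounds on $R_0'$ (from \eqref{0con4}), on $\partial_x N_k$ and on $\partial_x L$; your exponent arithmetic for these ($(2+a)\al-3\bt\le-\gamma$, $(1+a)\al-2\bt\le-\gamma$, $\al-\bt-\gamma\le-\gamma$) is exactly the arithmetic the paper performs in Proposition \ref{phi-def} when bounding $N_{1u}v_x$, $L_uv_x$, etc., so every step checks out under \eqref{ab1}--\eqref{ab2} and the Levi condition \eqref{f-con}--\eqref{f-con2}. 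Both arguments hinge on the same $\braket{x}^{\pm\gamma}$ cancellation and the same splitting of the $N_k$- and $L$-differences ($G(v_1)-G(v_2)$ against the decay of $\bR_1-\bS_1$, and $1/v_1-1/v_2$ against the decay of $Q(\bR_1,\bS_1)$); the paper's version is shorter because it needs only one weighted derivative bound rather than several, while yours is more robust in that it would survive even if the solution representation were only available along the two separate characteristic families.
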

\begin{proof}
Put  $\tilde{u}=u_1 - u_2 $,  $\tilde{R}=R_1 -R_2$, $\tilde{S}=S_1 -S_2$.
From \eqref{lfs}, we have
\begin{align*}
\tilde{R}_t - v^a _1 \tilde{R}_x  =&N_1(v_1, \bR_1, \bS_1) - N_1(v_2, \bR_2, \bS_2) \\
&+L(v_1, \bR_1, \bS_1) - L(v_2, \bR_2, \bS_2) \\
&+ (v^a _1 - v^a _2)R_{2x} .
\end{align*}
From the method of characteristic, we have that
\begin{align}
\tilde{R}(t,x)=&\int_0 ^t  (N_1(v_1, \bR_1, \bS_1) - N_1(v_2, \bR_2, \bS_2))  ds \notag \\
 &+ \int_0 ^t (L(v_1, \bR_1, \bS_1) - L(v_2, \bR_2, \bS_2)) ds \notag \\
&+  \int_0 ^t (v^a _1 - v^a _2)R_{2x} ds. \label{r-t}
\end{align}
The second term of the right hand side in \eqref{r-t} can be written as
\begin{align}
 \int_0 ^t (L(v_1, \bR_1, \bS_1) - L(v_2, \bR_2, \bS_2)) ds  =& \int_0 ^t (G(v_1)-G(v_2))(\bR_1  - \bS_1)ds  \notag \\
 & + \int_0 ^t G(v_2) (\bR_1 - \bS_1 - \bR_2 + \bS_2) ds ,\label{L-t}
\end{align}
where we set $G(\theta)=F(\theta)/2\theta^a$.
Using \eqref{f-con},  \eqref{f-con2} and \eqref{c1} , we obtain that 
\begin{eqnarray*}
|G(v_1) - G(v_2)| \leq \int_0 ^1 |G'(\theta v_1 + (1-\theta) v_2) |d\theta |v_1 - v_2| \\
\leq \int_0 ^1 \frac{C d\theta}{\theta v_1 +(1-\theta) v_2} |v_1 - v_2|  \\
\leq C  \braket{x}^{\al} |v_1 - v_2|,
\end{eqnarray*}
which implies that the first term of the right hand side in \eqref{L-t} is estimated as
\begin{eqnarray*}
\int_0 ^t |G(v_1)-G(v_2)||\bR_1  - \bS_1|ds \leq   CT A_3 \|v_1 - v_2\|_{L^\infty _T L^\infty } .
\end{eqnarray*}
From \eqref{f-con}, the second term is estimated by
\begin{eqnarray*}
\int_0 ^t |G(v_2)| |\bR_1 - \bS_1 - \bR_2 + \bS_2| ds \leq CT (\|\bR_1 - \bR_2\|_{L^\infty _T L^\infty } + \|\bS_1 - \bS_2\|_{L^\infty _T L^\infty }).
\end{eqnarray*}
Setting $N_1(v,\bR, \bS) =\frac{a}{2v}Q(\bR, \bS)$ and $Q(\bR, \bS)= (\bR^2 -\bR\bS)$,
 we change the first term of the right hand side  in \eqref{L-t} to
\begin{eqnarray}
\int_0 ^t  (N_1(v_1, \bR_1, \bS_1) - N_1(v_2, \bR_2, \bS_2))  ds=\int_0 ^t a\left(\dfrac{v_2 -v_1}{2v_1 v_2}  \right) Q(\bR_1, \bS_1)  ds \notag \\
 + \int_0 ^t \dfrac{a}{2 v_2}   \left( Q(\bR_1, \bS_1) -  Q(\bR_2, \bS_2) \right) ds. \label{r-t2}
\end{eqnarray}
The first term of the right hand side in \eqref{r-t2} is estimated as
\begin{eqnarray*}
\left|  \int_0 ^t a\left(\dfrac{v_2 -v_1}{2v_1 v_2}  \right) Q(\bR_1, \bS_1)  ds \right| \leq \int_0 ^t a\dfrac{|v_1 - v_2|}{2\braket{x}^{2\al} |v_1 v_2|} \braket{x}^{2\al}|Q(\bR_1, \bS_1)| ds \\
\leq \int_0 ^t a\dfrac{2|v_1 - v_2|}{A^2 _0} \braket{x}^{2\bt}|Q(\bR_1, \bS_1)| ds \leq CT \| v_1 - v_2 \|_{L^\infty _T L^\infty} .
\end{eqnarray*}
The second term can be estimated as
\begin{eqnarray*}
\left| \int_0 ^t \left(\dfrac{a}{2 v_2}  \right) \left( Q(\bR_1, \bS_1) -  Q(\bR_2, \bS_2) \right) ds \right| \leq \int_0 ^t \dfrac{a}{2A_0} \braket{x}^{\al} \left|Q(\bR_1, \bS_1) -  Q(\bR_2, \bS_2)\right| ds \\
\leq \int_0 ^t \dfrac{a}{A_0} \braket{x}^{\bt} \left|Q(\bR_1, \bS_1) -  Q(\bR_2, \bS_2)\right| ds \\
\leq CT \left( \| \bar{R}_1 - \bar{R}_2 \|_{L^\infty _T L^\infty} + \| \bar{S}_1 - \bar{S}_2 \|_{L^\infty _T L^\infty}\right) .
\end{eqnarray*}
Next we estimate the third term of the right hand side in \eqref{r-t2}. 
When $a \geq1$,  from the mean-value theorem for $|v^a _1 - v^a _2|$ and the boundedness of $R_x$, we obtain that
\begin{eqnarray*}
|(v^a _1 - v^a _2) R_{2x}| \leq C\|v_1 - v_2\|_{L^\infty _T L^\infty}.
\end{eqnarray*} 
While, $a<1$, by using the boundedness of $\braket{x}^{\gamma}R_x$, we have that
 \begin{align*}
|(v^a _1 - v^a _2) R_{2x}| \leq & a \int_0 ^1  (\theta v_1 +(1-\theta)v_2)^{a-1} |v_1 - v_2| |R_{2x}|d\theta \\
& \leq C \braket{x}^{\gamma}  |v_1 - v_2| |R_{2x}| \\
& \leq C\|v_1 - v_2\|_{L^\infty _T L^\infty}.
\end{align*} 
Therefore, we obtain that for sufficiently small $T$
\begin{eqnarray*}
\| \tilde{R} \|_{L^\infty _T L^\infty} \leq \dfrac{1}{6} \left( \| v_1 - v_2 \|_{L^\infty _T L^\infty} + \| \bar{R}_1 - \bar{R}_2 \|_{L^\infty _T L^\infty} + \| \bar{S}_1 - \bar{S}_2 \|_{L^\infty _T L^\infty}\right).
\end{eqnarray*}
In the same way as in the estimate of $\tilde{R}$, we have that
\begin{eqnarray*}
\| \tilde{S} \|_{L^\infty _T L^\infty} \leq \dfrac{1}{6} \left( \| v_1 - v_2 \|_{L^\infty _T L^\infty} + \| \bar{R}_1 - \bar{R}_2 \|_{L^\infty _T L^\infty} + \| \bar{S}_1 - \bar{S}_2 \|_{L^\infty _T L^\infty}\right).
\end{eqnarray*}
From \eqref{def-u0}, the above two estimates on $\tilde{R}$ and $\tilde{S}$ imply that for sufficiently small $T$
\begin{align*} 
\| \tilde{u} \|_{L^\infty _T L^\infty} \leq & \dfrac{T}{2} \left( \| \tilde{R} \|_{L^\infty _T L^\infty} +  \| \tilde{S} \|_{L^\infty _T L^\infty}  \right) \\
\leq & \dfrac{1}{6} \left( \| v_1 - v_2 \|_{L^\infty _T L^\infty} + \| \bar{R}_1 - \bar{R}_2 \|_{L^\infty _T L^\infty} + \| \bar{S}_1 - \bar{S}_2 \|_{L^\infty _T L^\infty}\right).
\end{align*}
Therefore, we find that $\Phi$ is a contraction mapping for sufficiently small $T>0$.
\end{proof}
Next we construct a unique solution $(u,R,S)$ of the nonlinear problem  and the characteristic curves $x_{\pm} (\cdot ;t,x)$.
\begin{Prop} \label{pr-non}
Under the same assumptions as in Proposition \ref{pr-con}, if $T$ is small, then there uniquely exist $(u,R,S) \in X_{\al} \times Y_{\beta, 1} \times Y_{\beta, 2} $ and $x_{\pm}(s) =x_{\pm} (s ;t,x)$ 
satisfying that
\begin{eqnarray}\label{nfs2}
\left\{  
\begin{array}{ll} 
R(t,x) = R(0,x_- (0)) + \int_0 ^t N_1(u, R, S)(s,x_- (s))+ L(u,R,S)(s,x_- (s)) ds,\\
S(t,x) = S(0,x_+ (0)) + \int_0 ^t N_2(v,R,S)(s,x_+ (s)) + L(u,R,S)(s,x_+ (s))ds,
\end{array}
 \right.
\end{eqnarray} 
\begin{eqnarray} \label{def-u-new}
u(t,x)=u_0 (x) + \int_0 ^t \frac{R+S}{2} ds 
\end{eqnarray}
and
\begin{eqnarray} \label{int-xu}
x_{\pm} (s; t,x) = x \pm \int_t ^s  u^a (\tau, x_{\pm} (\tau; t,x)) d\tau. 
\end{eqnarray}
\end{Prop}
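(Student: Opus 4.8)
The plan is to realize $(u,R,S)$ as the Picard fixed point of $\Phi$, to recover the limiting characteristics by an Arzel\`a--Ascoli argument, and then to bootstrap the regularity needed to land back in $X_\al\times Y_{\bt,1}\times Y_{\bt,2}$. First I would freeze the constants $A_0,\dots,A_5$ as prescribed by \eqref{0con2}--\eqref{0con5} (enlarging $A_2$ if necessary so that in addition $\|\braket{x}^{\bt}u_0^au_0'\|_{L^\infty}\le A_2$, which does no harm to Proposition \ref{phi-def}) and check that the time--independent triple $(v^{(0)},\bR^{(0)},\bS^{(0)})(t,x)=(u_0(x),R_0(x),S_0(x))$ belongs to $X_\al\times Y_{\bt,1}\times Y_{\bt,2}$; here \eqref{ini-con4} is exactly what supplies \eqref{c3} for $R_0$ and $S_0$. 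Setting $(v^{(n+1)},\bR^{(n+1)},\bS^{(n+1)})=\Phi(v^{(n)},\bR^{(n)},\bS^{(n)})$, Proposition \ref{phi-def} keeps the whole sequence in $X_\al\times Y_{\bt,1}\times Y_{\bt,2}$ and Proposition \ref{pr-con} makes it Cauchy in $C_b([0,T]\times\R)^3$ for small $T$, hence convergent to some $(u,R,S)\in C_b([0,T]\times\R)^3$. The pointwise bounds \eqref{c1} and \eqref{c2} survive the uniform limit, and the uniform weighted bounds on the first derivatives of the iterates (those underlying \eqref{w-es}, \eqref{v-es}, \eqref{rst-es}, \eqref{a2-con} together with $|u^{(n)}_t|\le A_3\braket{x}^{-\bt}$) survive as weak-$*$ limits; in particular $u,R,S$ are globally Lipschitz, the almost-everywhere derivatives of $(u,R,S)$ already obey \eqref{c1-2}--\eqref{c4}, and from \eqref{c1-2} and \eqref{c1} the velocity $u^a$ is Lipschitz in $x$ since $|\partial_x u^a|=a|u^au_x|/u\le C\braket{x}^{\al-\bt}$ is bounded.

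Next I would pass to the limit in the characteristic representations. Since every $v^{(n)}$ satisfies \eqref{u-pro}--\eqref{u-pro2} with the same constants, Lemma \ref{lip-x-lm} supplies a modulus of continuity for the curves $x_\pm^{(n)}(\cdot\,;t,x)$ built from $v^{(n)}$ that is uniform in $n$, while \eqref{x-es} bounds them; Arzel\`a--Ascoli plus a diagonal argument over an exhaustion of $\R$ then extracts a locally uniform limit $x_\pm(\cdot\,;t,x)$, and uniqueness for the limit ODE (whose velocity $u^a$ is Lipschitz in $x$ by the previous paragraph) promotes this to convergence of the full sequence and shows $x_\pm$ satisfies \eqref{int-xu}. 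Letting $n\to\infty$ in \eqref{lfs2} and in \eqref{def-u0}---using $v^{(n)}\to u$, $\bR^{(n)}\to R$, $\bS^{(n)}\to S$ uniformly, the continuity of $N_1,N_2,L$ on $\{u>0\}$ (which holds throughout, by \eqref{c1}), the convergence of the characteristics, and dominated convergence via the uniform weighted bounds---yields \eqref{nfs2} and \eqref{def-u-new}.

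The step I expect to be the main obstacle is the last one: showing $(u,R,S)\in X_\al\times Y_{\bt,1}\times Y_{\bt,2}$, since these classes are not closed in $L^\infty$ and the limit is a priori only Lipschitz. The device is the structural identity $u^au_x=\tfrac12(R-S)$ valid at the fixed point. Because $u,R,S$ and the characteristics are Lipschitz, differentiating \eqref{nfs2} almost everywhere (via Rademacher's theorem) recovers the differential system \eqref{lfs} with $v=u$ a.e.; consequently $P:=R-S-2u^au_x$ is Lipschitz in $t$ and satisfies $\partial_tP=\tfrac{a(R+S)}{2u}P$ a.e. with $P(0,\cdot)=(R_0-S_0)-2u_0^au_0'=0$ (by the definitions $R_0=u_1+u_0^au_0'$, $S_0=u_1-u_0^au_0'$, and the coefficient $\tfrac{a(R+S)}{2u}$ is bounded since $\al\le\bt$), so $P\equiv0$. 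Hence $u_x=\tfrac{R-S}{2u^a}$ has a continuous representative and, with $u_t=\tfrac12(R+S)$, gives $u\in C^1([0,T]\times\R)$; Lemma \ref{x-d-lm} now applies, so $x_\pm(s;t,x)$ is $C^1$ in $x$. Differentiating \eqref{nfs2} in $x$ then presents $(R_x,S_x)$ as the unique bounded solution of a linear Volterra system with continuous forcing and continuous coefficients, which is therefore continuous; so $R,S\in C^1$ in $x$, and then $R_t=u^aR_x+N_1+L$ and $S_t=-u^aS_x+N_2+L$ are continuous, giving $R,S\in C^1([0,T]\times\R)$. Together with the bounds already propagated, $(u,R,S)\in X_\al\times Y_{\bt,1}\times Y_{\bt,2}$.

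Finally, for uniqueness, if two triples in $X_\al\times Y_{\bt,1}\times Y_{\bt,2}$ satisfy \eqref{nfs2}--\eqref{int-xu}, then each is a fixed point of $\Phi$---the curves \eqref{int-xu} are precisely the characteristics of the linear problem \eqref{lfs} with $v$ equal to the first component, so \eqref{nfs2} and \eqref{def-u-new} say exactly that $\Phi$ fixes the triple, by uniqueness for the linear problem---and Proposition \ref{pr-con} applied to these two fixed points gives $D\le cD$ with $c<1$ for the sum $D$ of the three $L^\infty_TL^\infty$-norms of the differences, whence $D=0$; the characteristics then agree by uniqueness of \eqref{int-xu}.
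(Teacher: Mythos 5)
Your overall architecture coincides with the paper's: Picard iteration with $\Phi$, convergence of the iterates in $L^\infty$ via Proposition \ref{pr-con}, Arzel\`a--Ascoli plus a diagonal argument for the characteristic curves using Lemma \ref{lip-x-lm} and \eqref{x-es}, passage to the limit in \eqref{lfs2} and \eqref{def-u0} via the uniform Lipschitz bounds \eqref{rs-lip} and \eqref{m1}, recovery of the weighted derivative bounds for the limit, and uniqueness by rerunning the contraction estimate. Your weak-$*$ argument for propagating the a.e.\ derivative bounds and your observation that $u^a$ is Lipschitz in $x$ (so the limiting ODE \eqref{int-xu} has a unique solution and the whole sequence of characteristics converges) are, if anything, more carefully spelled out than the paper's ``in the same way as in the proof of Proposition \ref{phi-def}.''

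There is, however, one genuine mismatch with the statement being proved. Your regularity bootstrap hinges on the identity $u^au_x=\tfrac12(R-S)$, obtained from $\partial_tP=\tfrac{a(R+S)}{2u}P$ with $P(0,\cdot)=(R_0-S_0)-2u_0^au_0'=0$. That initial condition is exactly the compatibility relation \eqref{fs20}, which the paper deliberately does \emph{not} assume in Proposition \ref{pr-non} --- it states immediately after the proof that ``in the discussions so far, we do not assume any relations between $u_0$ and $R_0, S_0$,'' and introduces \eqref{fs20} only for Proposition \ref{last-pr}, where precisely this $P$-type computation appears (see \eqref{r-sx}--\eqref{ux-rel}). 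Without \eqref{fs20} you have $P(0,\cdot)\neq 0$ in general, so $P\not\equiv 0$, and your route to the continuity of $u_x$ (hence to $u\in C^1$, to the applicability of Lemma \ref{x-d-lm}, and to the Volterra argument for $R_x,S_x$) collapses. The paper avoids this by estimating $u^au_x$ directly from \eqref{def-u-new}: write $u_x=u_0'+\int_0^t\tfrac{R_x+S_x}{2}\,ds$, substitute $R_x+S_x=\tfrac{1}{u^a}(R_t-S_t+N_2-N_1)$ from the limiting equations, and integrate by parts in $t$ as in \eqref{u-ese}--\eqref{rs-hen2}; this needs no relation between $u_0$ and $R_0,S_0$. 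Your argument is fine for the application to Theorem \ref{main} (where \eqref{fs20} holds by construction), but as a proof of Proposition \ref{pr-non} as stated you should either add \eqref{fs20} to the hypotheses or replace the $P$-bootstrap by the paper's direct estimate.
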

\begin{proof}
We fix $K \geq 1$ arbitrarily.
From Proposition \ref{phi-def}, we can define a  sequence $\{  u_n, R_n, S_n\}_{n \in \N}$ in  $X_\al \times Y_{\bt,1} \times Y_{\bt,2}$ such that 
\begin{eqnarray*}
(u_{n+1}, R_{n+1}, S_{n+1})= \Phi(u_n, R_n, S_n)
\end{eqnarray*}
with initial term $(u_0, S_0, R_0)$.
 By Proposition \ref{pr-con}, $(u_n, R_n, S_n)$ converges the fixed point $(u,R,S)$ in the topology of $L^\infty$.
While we can define a sequence of the characteristic curves $\{ x_{\pm, n} (\cdot ;t,x) \}_{n \in \N}$. We note that the characteristic curves can be defined uniquely on $[0,T]$ with arbitrarily fixed $(t,x)$ by the  Lipschitz continuity  and the boundedness of $u_n ^a$. For arbitrarily fixed $K \geq 1$,  we see that  $\{ x_{\pm, n} (\cdot ;t,x) \}_{n \in \N}$ is the uniform equicontinuous  and uniform bounded from Lemma \ref{lip-x-lm} and \eqref{x-es}.
Thus  the Arzel\'a-Ascoli theorem implies that there exists a subsequence of  $\{ x_{\pm, n} (\cdot ;t,x) \}_{n \in \N}$ (we use the same suffix as in the original sequence) such that
$x_{\pm, n} (\cdot )$ converges $x_\pm (\cdot )$ uniformly on $[0,T] \times [0,T] \times [-K,K]$ as $n \rightarrow \infty$. 
Note that this choice of the subsequence is depending on $K$. However,
from Cantor's diagonal argument, we can reselect a subsequence independently of $K$ such that the convergence holds on $[0,T] \times [0,T] \times [-K',K']$
with any $K' \geq 1$.
From \eqref{rs-lip} and \eqref{m1}, we see that as $n\rightarrow \infty$
\begin{eqnarray*}
(u_n (t,x_{\pm,n}(t)),R_n (t,x_{\pm,n}(t)), S_n (t,x_{\pm,n}(t)))\rightarrow (u (t,x_{\pm}(t)),R (t,x_{\pm}(t)), S (t,x_{\pm}(t))).
\end{eqnarray*}
Hence \eqref{nfs2} and \eqref{int-xu} are satisfied.
Now we check that $(u,R,S) \in X_\al \times Y_{\bt,1} \times Y_{\bt,2}.$
It is obvious that the properties  \eqref{c1} and \eqref{c2} are satisfied.
From the Lipschitz continuity, $u,R,S$ are differentiable almost everywhere.
In the same way as in the proof of Proposition \ref{phi-def}, we can obtain the boundedness of 
$\braket{x}^{\bt} u^a u_x$ and $\braket{x}^{\bt}  u_t$, since the constant $A_2$ is taken independently of $A_4$ and $A_5$.
Thus we have $u \in X_\al$.
To show the boundedness of $\braket{x}^{\gamma}  R_x$  and $\braket{x}^{\gamma}  S_x$, differentiating the both side of \eqref{nfs2} with $x$, we obtain  that
\begin{align}
V (t,x) & = V_0 (x_- (0;t,x)) \partial_x x_{-} (0;t,x)  \notag \\
 + &\int_0 ^t \partial_x x_{-} (s;t,x) \left(N_{1u} u_x + N_{1R} V + N_{1S}  W\right) (t,x_- (s;t,x)) ds \notag \\
 +&  \int_0 ^t \partial_x x_{-} (s;t,x) \left(L_u u_x  + L_R V + L_S W \right) (t,x_- (s;t,x)) ds \label{V-eq}
\end{align}
and
\begin{align}
W (t,x) & = W_0 (x_- (0;t,x)) \partial_x x_{+} (0;t,x)  \notag \\
 + &\int_0 ^t \partial_x x_{+} (s;t,x) \left(N_{2u} u_x + N_{2R} W + N_{2S}  V\right) (t,x_+ (s;t,x)) ds \notag \\
 +&  \int_0 ^t \partial_x x_{+} (s;t,x) \left(L_u u_x  + L_R V + L_S W \right) (t,x_+ (s;t,x)) ds. \label{W-eq}
\end{align}
In the same way as in the proof of \eqref{w-es} and \eqref{v-es}, we achieve the boundedness of $\braket{x}^{\gamma} W$
and $\braket{x}^{\gamma} V$. The estimates of $\braket{x}^{\gamma} R_t$ and $\braket{x}^{\gamma} S_t$ can be shown by similarity
to in \eqref{rst-es}. Thus we have  $(R,S) \in Y_{\bt,1}\times Y_{\bt,2}.$ The uniqueness can be shown in the same way as in the proof of
Proposition \ref{pr-con}.
\end{proof}
In the discussions so far, we do not assume any relations between $u_0$ and $R_0, S_0$. To show $u$ in \eqref{def-u-new} is a solution of \eqref{req}, we assume that
\begin{eqnarray}\label{fs20}
 \left\{  
\begin{array}{ll}
u' _0  =\frac{R_0 - S_0}{2 u^a _0},\\
u_1 =\frac{R_0 +S_0}{2}.
\end{array}
\right.
\end{eqnarray} 
Moreover, we  improve the regularity of the solution if $(u_0, u_1) \in C^2 \times C^1 $.
The following proposition completes the proof of Theorem \ref{main}.
\begin{Prop} \label{last-pr}
Addition to the assumption of Proposition \ref{pr-non}, we assume for $(u_0, u_1) \in C^2 \times C^1 _b$ that \eqref{fs20} is satisfied.
Then the function $u$ defined in \ref{def-u-new} is $C^2$ on $[0,T]\times \R$ and is the classical solution of \eqref{req}. 
\end{Prop}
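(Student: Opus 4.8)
The plan is to use the compatibility relations \eqref{fs20} to identify $R$ and $S$ with the Riemann invariants of the function $u$ produced by Proposition \ref{pr-non}, and then to bootstrap the regularity of $u$. Let $(u,R,S)\in X_\al\times Y_{\bt,1}\times Y_{\bt,2}$ be the triple furnished by Proposition \ref{pr-non} (the hypotheses $u_0\in C^2$, $u_1\in C^1_b$ and \eqref{fs20} guarantee $R_0,S_0\in C^1_b$, as required there). Recall that $u,R,S\in C^1([0,T]\times\R)$ with $u>0$, that the first and third equations of \eqref{fs} hold for $(u,R,S)$ — being the differential form of \eqref{nfs2} along the characteristics \eqref{int-xu} — and that $N_1-N_2=\frac{a}{2u}(R+S)(R-S)$ and $N_1+N_2=\frac{a}{2u}(R-S)^2$. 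Differentiating \eqref{def-u-new} gives $u_t=\frac{R+S}{2}$ and $u_x(t,x)=u_0'(x)+\int_0^t\frac{R_x+S_x}{2}(s,x)\,ds$, so $\partial_t u_x=\frac{R_x+S_x}{2}$ is continuous.

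First I would show that $u^a u_x=\frac{R-S}{2}$ on all of $[0,T]\times\R$. Set $z:=u^a u_x-\frac{R-S}{2}$. By the remarks above, $z$ is $C^1$ in $t$, and a direct computation using $u_t=\frac{R+S}{2}$, $\partial_t u_x=\frac{R_x+S_x}{2}$ and $R_t-S_t=u^a(R_x+S_x)+(N_1-N_2)$ gives the linear ordinary differential equation
\begin{eqnarray*}
\partial_t z=a u^{a-1}\frac{R+S}{2}u_x-\frac{a}{4u}(R+S)(R-S)=\frac{a(R+S)}{2u}\,z .
\end{eqnarray*}
For each fixed $x$ the coefficient $\frac{a(R+S)}{2u}$ is continuous on $[0,T]$ (in fact bounded, since $|R+S|\le C\braket{x}^{-\bt}$, $u\ge A_0\braket{x}^{-\al}$ and $\al\le\bt$), and \eqref{fs20} gives $z(0,x)=u_0^a u_0'-\frac{R_0-S_0}{2}=0$; hence $z\equiv0$. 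Thus $u_x=\frac{R-S}{2u^a}$, i.e. the second equation of \eqref{fs} holds as well, and together with $u_t=\frac{R+S}{2}$ this shows $R=u_t+u^a u_x$ and $S=u_t-u^a u_x$; moreover $u(0,x)=u_0(x)$ and $u_t(0,x)=\frac{R_0+S_0}{2}=u_1(x)$ by \eqref{def-u-new} and \eqref{fs20}.

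Next I would upgrade the regularity. Since $u\in C^1$ and $u>0$, the function $u^{-a}$ is $C^1$; as $R-S\in C^1$, the identity $u_x=\frac{R-S}{2}u^{-a}$ shows $u_x\in C^1$, so $u_{xx}$ and $u_{xt}$ are continuous. Likewise $u_t=\frac{R+S}{2}\in C^1$ makes $u_{tt}$ and $u_{tx}$ continuous, and $u_{xt}=u_{tx}$ by continuity of the mixed partials, so $u\in C^2([0,T]\times\R)$. Finally, from $u_{tt}=\frac{R_t+S_t}{2}=\frac{u^a(R_x-S_x)}{2}+\frac{a}{4u}(R-S)^2+\frac{F(u)(R-S)}{2u^a}$, substituting $R-S=2u^a u_x$ and using the identity $\partial_x(u^{2a}u_x)=u^a\partial_x(u^a u_x)$, one computes
\begin{eqnarray*}
u_{tt}=2au^{2a-1}u_x^2+u^{2a}u_{xx}+F(u)u_x=\partial_x(u^{2a}u_x)+F(u)u_x ,
\end{eqnarray*}
so $u$ is a classical solution of \eqref{req}. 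Together with the bounds built into $X_\al\times Y_{\bt,1}\times Y_{\bt,2}$ — which are exactly \eqref{dc1}--\eqref{dc3} — and the uniqueness in Proposition \ref{pr-non} (any $C^2$ solution of \eqref{req} obeying \eqref{dc1}--\eqref{dc3} has Riemann invariants lying in $Y_{\bt,1}\times Y_{\bt,2}$ and solving \eqref{nfs2}), this completes the proof of Theorem \ref{main}.

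The only genuinely delicate step is the first one: a priori $u$ is merely $C^1$, so the ordinary differential equation for $z$ has to be derived from the explicit integral formula for $u_x$ rather than from second derivatives of $u$, and the argument rests on $\frac{a(R+S)}{2u}$ being well behaved — which is where the structural hypothesis $\al\le\bt$ enters, the remaining Levi-type conditions \eqref{f-con}--\eqref{f-con2} having already been used in Propositions \ref{phi-def}--\ref{pr-non} to place $R$ and $S$ in $Y_{\bt,j}$. Once $z\equiv0$ is known, the $C^2$ upgrade and the verification of the equation are routine algebra.
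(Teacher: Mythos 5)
Your central step is the same as the paper's, just packaged differently. The paper proves $u_x=\frac{R-S}{2u^a}$ by writing $u_x=u_0'+\int_0^t\frac{R_x+S_x}{2}\,ds$, converting $R_x+S_x$ into $\frac{1}{u^a}\left(N_2-N_1+R_t-S_t\right)$ via the characteristic equations, and integrating by parts in $t$, at which point \eqref{fs20} kills the boundary term $-2u_0'$ and the remaining integrals cancel. Your Gronwall argument for $z=u^au_x-\frac{R-S}{2}$, with $\partial_t z=\frac{a(R+S)}{2u}z$ and $z(0,\cdot)=0$, is exactly the differentiated form of that computation, so the two are equivalent; your version is arguably cleaner to state. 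Your final verification of \eqref{req} is also correct, though the intermediate ``identity'' $\partial_x(u^{2a}u_x)=u^a\partial_x(u^au_x)$ you cite is false (the two sides differ by $au^{2a-1}u_x^2$); your displayed formula $u_{tt}=2au^{2a-1}u_x^2+u^{2a}u_{xx}+F(u)u_x$ comes out right anyway because the missing term is exactly supplied by $\frac{a}{4u}(R-S)^2$.

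The one substantive gap is in the regularity upgrade. You deduce $u\in C^2$ from ``$R-S\in C^1$,'' taking the membership $(R,S)\in Y_{\bt,1}\times Y_{\bt,2}\subset C^1_b$ asserted in Proposition \ref{pr-non} at face value. But the fixed point there is built as an $L^\infty$ limit, and what the construction actually provides at this stage is that $R$ and $S$ are Lipschitz, hence differentiable only almost everywhere with bounded derivatives satisfying the integral equations \eqref{V-eq}--\eqref{W-eq}; the \emph{continuity} of $R_x$ and $S_x$ is not free, and it is precisely what is needed to make $u_{xx}$, $u_{tx}$, $u_{tt}$ continuous. This is why the paper's proof ends by invoking Theorem 4 of Douglis \cite{D} to obtain continuity of $R_x$ and $S_x$ (and then of $R_t$, $S_t$ through the equations). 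Your proof needs either that citation or a substitute argument (e.g.\ showing directly that the right-hand sides of \eqref{V-eq}--\eqref{W-eq} define continuous functions). Everything else — the use of \eqref{fs20} for the initial conditions, the boundedness of $\frac{a(R+S)}{2u}$ via $\al\le\bt$, and the algebra recovering \eqref{req} — is sound.
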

\begin{proof}
From the Lipschitz continuity of $R,S$, these are differentiable  almost everywhere and satisfy that
\begin{eqnarray}\label{fs2}
 \left\{  
\begin{array}{ll}
R_t -u^a R_x=N_1(u,R,S) + L(u,R,S) , \\
S_t +u^a S_x =N_2 (u,R,S) + L(u,R,S).
\end{array}
\right.
\end{eqnarray} 
Since $u$ is also differentiable almost everywhere, differentiating  \eqref{def-u-new}, we have that
\begin{eqnarray} \label{u-bibun}
\partial_x u = u' _{0} (x) + \int_0 ^t \frac{R_x+S_x}{2} ds.
\end{eqnarray}
From the first and third equation of \eqref{fs2}, we have that
\begin{align}
\int_0 ^t R_x +S_x ds = &  \int_0 ^t \frac{1}{u^a} \left( N_2(u,R,S)- N_1(u,R,S) + R_t - S_t \right) ds \notag \\
= &  \int_0 ^t \frac{1}{u^a} \left(  \frac{a}{2u}(S^2 - R^2) + R_t - S_t \right) ds .\label{henkei1}
\end{align}
From the integration by parts, $u_t = \frac{R+S}{2}$ and \eqref{fs20}, it follows that
\begin{align}
\int_0 ^t \frac{1}{u^a} \left(  R_t - S_t \right) ds = & -2u'_0 (x) + \frac{R-S}{u^a}+\int_0 ^t  \frac{a u_t}{u^{a+1}}  \left(  R - S \right) ds \notag \\
=&  -2u'_0 (x) +\frac{R-S}{u^a} \notag \\
+ & \int_0 ^t  \frac{a}{2u^{a+1}}  \left(  R^2 - S^2 \right) ds. \label{r-sx}
\end{align}
From \eqref{u-bibun},  \eqref{henkei1}  and \eqref{r-sx}, we have that
\begin{eqnarray} \label{ux-rel}
\partial_x u = \frac{R-S}{2u^a}.
\end{eqnarray}
Combining \eqref{fs2}, \eqref{def-u-new} and \eqref{ux-rel}, we have that the function $u$ satisfies \eqref{req}.
Lastly, applying Theorem 4 in Douglis \cite{D}, we obtain the continuity of the $W=R_x$ and $V=S_x$.
From the equations of $R,S$, we see that $R_t$ and $S_t$ are also continuous. Hence we have the continuity of $u_{xx}, u_{tx}, u_{tt}$.
Therefore we have that $u \in C^2 ([0,T]\times \R)$.
\end{proof}

\section{Concluding remarks}

\subsection{Physical background}
We set a function $G$ as a primitive function of $F$ such that $G(0)=0$.
Integrating with $x$ over $[-\infty, x]$, we formally obtain that
\begin{eqnarray*}
\int_{-\infty} ^x u_{tt} dx = \partial_x \left(\frac{u^{a+1}}{a+1} \right) + G(u).
\end{eqnarray*}
Setting  $v=\int_{-\infty} ^x u_t dx$ and $\sigma (u) =u^{a+1} /a+1$, we have the following 1st order hyperbolic equation:
\begin{eqnarray} 
\left\{  \begin{array}{ll} \label{req-p}
u_t - v_x =0,\\
v_t - \partial_x (\sigma (u)) = G(u).
\end{array} \right.
\end{eqnarray} 
This equations govern the motion for one dimensional elastic waves with the case that the density of  material is equal to $1$. 
Unknown functions $u$ and $v$ describe the differentiations of the displacement $X$ with $x$ and $t$ respectively. Namely $u=X_x (t,x)$ and $v=X_t (t,x)$.
The first equation means the relation $u_t =X_{xt} =X_{tx}=v_x$. The second equation is Newton's second since $v_t$ is the acceleration.
From the definition of $u$,  $u$ is the strain (more precisely, $(1,1)$ component of the stain matrix) and $\sigma (u)$ is so-called stress-strain relation.
$G$ is a external force term depending only on the strain.
The detailed derivation with $G\equiv 0$ is given in Cristescu's book \cite{NC}.

\subsection{On the generalization of the main theorem}
The our existence theorem is also applicable to $ \partial^2 _t u = (c(u)^2 u_{x})_x + F(u)u_x$ under the following assumptions on $c(\cdot ) \in C ([0,\infty)) \cap C^2 ((0,\infty))$ and $F \in C ([0,\infty)) \cap  C^1 ((0,\infty)) $
\begin{eqnarray}
C_{1. K} \theta^a \leq c(\theta) \leq C_{2, K},   \label{c-con1} \\
| c'(\theta)| \leq  C_{3,K} \theta^{a-1} \label{c-con2}, \\
|c''(\theta)| \leq C_{4,K} \theta^{a-2}, \label{c-con3}
\end{eqnarray}
and
\begin{eqnarray} \label{f-con23}
 |F(\theta)| \leq C_{5, K} \theta^a, \\
 |F'(\theta)| \leq C_{6, K} \theta^{a-1}, \label{f-con24}
\end{eqnarray}
where $a \geq 0$, $\theta \in [0,K]$ for $K>0$ and $C_{j,K}$  are positive constants depending on $K$ for $j=1,\ldots, 6$. For this equation, the unknown valuable $R$ and $S$ are defined by
\begin{eqnarray*}
R= u_t + c(u) u_x, \\
S= u_t -c(u) u_x
\end{eqnarray*}
and $R$ and $S$ satisfy that
\begin{eqnarray}\label{fs-c}
 \left\{  
\begin{array}{ll}
\partial_t R -u^a\partial_x R=\frac{c'}{2c}(RS-S^2) + F(u)\frac{R-S}{2c} , \\
\partial_t S +u^a\partial_x S =\frac{c'}{2c}(RS-R^2) + F(u)\frac{R-S}{2c}.
\end{array}
\right.
\end{eqnarray} 
Since we have  that $\frac{|c'(u)|}{c(u)} \leq C \braket{x}^{\al} $ from the assumption on $c$ and initial data, we can obtain  weighted $L^\infty$ estimated for $R$ and $S$.
The assumption \eqref{c-con3} is used in the proof of the construction of the contraction mapping.

\subsection{Finite time blow-up or degeneracy}
We define $T^*$ as the maximal existence time of the solution constructed by Theorem \ref{main}.
When $T^* < \infty$, we have the following  criterion of the break-down:
\begin{eqnarray} \label{blow}
\limsup_{t\rightarrow T^*} \| \braket{x}^{\bt} u_t\|_{L^\infty } + \| \braket{x}^{\bt} u_x \|_{L^\infty } =\infty
\end{eqnarray}
or
\begin{eqnarray} \label{deg}
\liminf_{t\rightarrow T^*}  \inf_{\R} \braket{x}^{\al} u (t,x)=0.
\end{eqnarray}
We call \eqref{blow} and \eqref{deg} the blow-up and the degeneracy respectively. 
In the case that $F \equiv 0$, we can obtain the non-trivial solutions blow up in finite time, if $R(0,x)$ and $S(0,x)$ are non-negative.
In fact, we can show that the non-negativity of $R(0,x)$ and $S(0,x)$ preserves as time goes by, from which we have
$u_t (t,x) \geq 0$. Thus we find that \eqref{deg} does not occurs in finite time. Therefore, using the method of Lax \cite{lax} or \cite{z} (see also
Chen \cite{G}), we have the conclusion. While, in the case  that $F \equiv 0$, we can apply main theorems to the equation in \eqref{req} and 
find that \eqref{deg} can occur in finite time for non-trivial solution, if $R(0,x)$ and $S(0,x)$ are non-positive.
Sufficient conditions for the occurrence of \eqref{deg} have been studied in the author's papers \cite{s3, s4}.

\subsection{Multi-dimensional case}
The multi-dimensional version of the equation in \eqref{req} is
\begin{eqnarray*}
\partial^2 _t u = u^{2a} \Delta u + F(u) \cdot \nabla u =0.
\end{eqnarray*}
The method of characteristic (and Riemann invariant) does not work, even with radial initial data.
In the forthcoming paper, we deal this problem via a local-energy argument.


\begin{thebibliography}{}



          
          
           \bibitem{G}  G. Chen, {\em Formation of singularity and smooth wave propagation for the non-isentropic compressible Euler equations}, J. Hyperbolic Differ. Equ.,  8(4),  671-690, 2011.
           
             \bibitem{CS}  F. Colombini and S. Spagnolo, {\em An example of a weakly hyperbolic Cauchy problem not well posed in $C^\infty$}, Acta Math., 148(1), 243-253, (1982).
           
           
           \bibitem{CL} R. Courant and P. D. Lax, {\em On nonlinear partial differential equations with two independent variables}, Comm. Pure Appl. Math., 2(2-3),  255-273, 1949.
           
           \bibitem{NC} N. Cristescu,  {\em Dynamic plasticity}, North-Holland, Appl. Math. Mech., 1967.
           
           \bibitem{D} A. Douglis, {\em Some existence theorems for hyperbolic systems of partial differential equations in two independent variables}, Comm. Pure Appl. Math., 5(2),  119-154, 1952.
   
           
           
           
           
           
           
           \bibitem{dre} M. Dreher, {\em The wave equation for the $p$-Laplacian},  Hokkaido Math. J. 36(1), 21-57,  2007.
           
           
           
           
           
           
           
           
           
               \bibitem{HW2} P. Hartman and A. Winter, {\em On the hyperbolic partial differential equations}, American Journal of Math., 74, 834-864,  1952.
           
           \bibitem{HW} Y. Hu and G. Wang, {\em On the Cauchy problem for a nonlinear variational wave equation with degenerate initial data,} Nonlinear Analysis, 176, 192-208,  2018.
           
           \bibitem{HKM} T. J. R. Hughes, T. Kato and J. E. Marsden, {\em Well-posed quasi-linear second-order hyperbolic systems with applications to nonlinear elastodynamics and general relativity}, Arch. Ration. Mech. Anal., 63(3),  273-294, 1977.
           
\bibitem{qh} Q. Han, {\em Energy estimates for a class of degenerate hyperbolic equations,} Math. Ann. 347(2),   339-364, 2010.

\bibitem{IV} V. Ivrii and V. Petkov, {\em Necessary conditions for the Cauchy problem for non-strictly hyperbolic equations to be well posed,} Russian Math. Surveys,  29(5), 1-70, 1974.



\bibitem{tk} T. Kato, {\em The Cauchy problem for quasi-linear symmetric hyperbolic systems,} Arch. Ration. Mech. Anal.,  58(3), 181-205, 1975. 


\bibitem{lax} P. D. Lax, {\em Development of Singularities of solutions of nonlinear hyperbolic partial differential equations,} J. Math. Phys. 5(5), 611-613, 1964.

\bibitem{m} A. Majda, {\em Compressible fluid flow and systems of conservation laws in several space variables,} Springer, Appl. Math. Sci.,  1984.


\bibitem{rm0} R. Manfrin,  {\em A solvability result for a nonlinear weakly hyperbolic equation of second order,}  NoDEA. 2(2) (1995),  245-264.

\bibitem{rm1} R. Manfrin,  {\em Local solvability in $C^\infty$ for quasi-linear weakly hyperbolic equations of second order,}   Comm. Partial Differential Equations 21(9-10), 1487-1519, 1996.

 \bibitem{rm2} R. Manfrin,  {\em Well posedness in the $C^\infty$ class for $u_{tt}=a(u)\Delta u$,}   Nonlinear Anal., 36(2), 177-212, 1999.
 

\bibitem{ol} O. Oleinik, {\em On the Cauchy problem for weakly hyperbolic equations,} Comm. Pure. Appl. Math., 23, 197-191, 1970.



\bibitem{sid} T. Sideris, {\em Ordinary Differential Equations and Dynamical Systems,} Springer, Atlantis Studies in Differential Equations, 2013.


\bibitem{s3} Y. Sugiyama, {\em Degeneracy in finite time of 1D quasilinear wave equations,} SIAM J. Math. Anal., 48(2), 847-860, 2016. 

\bibitem{s4} Y. Sugiyama, {\em Degeneracy in finite time of 1D quasilinear wave equations II,} Eolution equation and control theory, 6(4), 615-628, 2017.


\bibitem{TT} K. Taniguchi and Y. Tozaki,  {\em A hyperbolic equation with double characteristics which has a solution with branching singularities,} Math. Japonica, 25(3),  279-300, 1980.

\bibitem{z} N. J. Zabusky, {\em Exact solution for the vibrations of a nonlinear continuous model string,}  J. Math. Phys., 3(5), 1028-1039, 1962. 

\bibitem{tyzy}  T. Zhang and Y. Zheng, {\em The structure of solutions near a sonic line in gas dynamics via the pressure gradient equation,} J. Math. Anal. Appl., 443(1), 39-56,  2016.

\end{thebibliography}
\end{document}